\newcolumntype{F}[1]{>{\raggedright\arraybackslash\hspace{0pt}}p{#1}}
\theoremstyle{plain}
\newtheorem{thm}{Theorem}[section]
\newtheorem{prop}[thm]{Proposition}
\newtheorem{lem}[thm]{Lemma}
\newtheorem{cor}[thm]{Corollary}
\newtheorem{qtn}[thm]{Question}
\newtheorem{conj}[thm]{Conjecture}
\theoremstyle{definition}
\newtheorem{exm}[thm]{Example}
\newtheorem{dfn}[thm]{Definition}
\theoremstyle{remark}
\newtheorem{clm}[thm]{Claim}
\numberwithin{equation}{section}
\newcommand\scalemath[2]{\scalebox{#1}{\mbox{\ensuremath{\displaystyle #2}}}}
\title[Hyperbolic Dehn Filling, Volume, and Transcendentality]{Hyperbolic Dehn Filling, Volume, and Transcendentality}
\author[B. Jeon]{\bfseries Bogwang Jeon}
\address{Department of Mathematics, POSTECH\\
  77 Cheongam-Ro, Pohang 37673, Republic of Korea}
\email{bogwang.jeon@postech.ac.kr}
\author[S. Oh]{\bfseries Sunul Oh}
\address{Department of Mathematics, POSTECH\\
  77 Cheongam-Ro, Pohang 37673, Republic of Korea}
\email{soonool@postech.ac.kr}
\keywords{hyperbolic 3–manifold, hyperbolic Dehn filling, volume}
\subjclass[2020]{57K32, 57K31}
\begin{document}

\begin{abstract}
Let $M$ be a 1-cusped hyperbolic 3-manifold. In this paper, we study the behavior of $N_M(v)$, the number of Dehn fillings of $M$ with a given volume $v(\in \mathbb{R})$. We conduct extensive computational experiments to estimate $N_M$ and propose a theoretical framework to explain its behavior. Further, we prove that the growth of $N_M$ is slower than any power of its filling coefficient.
\end{abstract}

\maketitle

\section{Introduction}\label{intro}

The following is a well-known result of Thurston. 
\begin{thm}[Thurston]\label{T}
Let $M$ be a $1$-cusped hyperbolic 3-manifold and $M(p, q)$ be its $(p, q)$-Dehn filling. Then $M(p, q)$ is hyperbolic for sufficiently large $|p|+|q|$ and 
\begin{equation*}
\textnormal{vol}\,M(p, q)\longrightarrow \textnormal{vol}\,M
\end{equation*} 
as $|p|+|q|$ goes to $\infty$. 
\end{thm}
Throughout this paper, a \textit{hyperbolic $3$-manifold} refers to an orientable, complete hyperbolic $3$-manifold of finite volume.

As a corollary of the above theorem, it follows that 
\begin{cor}
Let $M$ be a $1$-cusped hyperbolic 3-manifold. There are only a finite number of Dehn fillings of $M$ with the same volume. 
\end{cor}

For $M$ as above, if the number of Dehn fillings of $M$ with a given volume $v \in \mathbb{R}$ is denoted by $N_M(v)$, the natural question then to ask is 

\begin{qtn}\cite{G, HM, J1}\label{MainQ}
For a given $1$-cusped hyperbolic 3-manifold $M$, does there exist $c=c(M)$ such that $N_M(v)<c$ for any $v$? 
\end{qtn}

Or, more fundamentally, one may ask 

\begin{qtn}\label{MainQ2}
Let $M$ be a $1$-cusped hyperbolic 3-manifold. Can we completely classify Dehn fillings of $M$ via volume?
\end{qtn}

In \cite{J2}, the first author answered Question \ref{MainQ} affirmatively in a special case as follows:

\begin{thm}[\cite{J2}, Theorem 1.4]\label{realquad}
Let $M$ be a 1-cusped hyperbolic 3-manifold whose cusp shape is quadratic. Then there exists $c$ such that $N_M(v)<c$ for all $v$.
\end{thm}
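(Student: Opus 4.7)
The plan is to combine the Neumann--Zagier asymptotic volume formula with the arithmetic rigidity provided by a quadratic cusp shape. By Thurston's Dehn surgery theorem, $\mathrm{vol}(M(p,q))<\mathrm{vol}(M)$ with $\mathrm{vol}(M(p,q))\to\mathrm{vol}(M)$ as $|p|+|q|\to\infty$, so $N_M(v)=0$ for $v\ge\mathrm{vol}(M)$, and for $v<\mathrm{vol}(M)$ only finitely many small-coefficient fillings contribute. Everything therefore reduces to bounding, uniformly in $v$, the number of coincidences among Dehn fillings with large $|p|+|q|$.

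Setting $\zeta:=p+q\tau$ for the cusp shape $\tau$, I would invoke the Neumann--Zagier expansion
\[
\mathrm{vol}(M)-\mathrm{vol}(M(p,q))\;=\;\frac{\pi^{2}\,\mathrm{Im}(\tau)}{|\zeta|^{2}}\;+\;\mathrm{Im}\!\Bigl(\sum_{k\ge 2} a_{k}\,\zeta^{-2k}\Bigr),
\]
with coefficients $a_{k}\in\mathbb{C}$ depending only on $M$ and the series converging absolutely for $|\zeta|$ large. The equality $\mathrm{vol}(M(p_1,q_1))=\mathrm{vol}(M(p_2,q_2))$ is then the equality of two real-analytic series in $\zeta_1^{-1}$ and $\zeta_2^{-1}$.

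Since $\tau$ is quadratic, $\mathbb{Z}[\tau]$ is an order in the imaginary quadratic field $K:=\mathbb{Q}(\tau)$, and $N(p,q):=|p+q\tau|^{2}$ is, up to a positive rational scalar arising from the minimal polynomial of $\tau$, a positive-definite integral binary quadratic form. Its values are therefore separated in $\mathbb{R}_{>0}$. The leading NZ term dominates the $O(|\zeta|^{-4})$ tail, so matching volumes forces $|1/N(p_1,q_1)-1/N(p_2,q_2)|=O(|\zeta|^{-4})$, which confines the $N(p_i,q_i)$ to $O(1)$ nearby lattice values once $|\zeta|$ exceeds a threshold depending only on $M$. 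For each such fixed value $n$, the set $\{\zeta\in\mathbb{Z}[\tau]:\zeta\bar\zeta=n\}$ is finite, of size $O(d(n))$ by counting ideals of norm $n$ in $\mathcal{O}_K$.

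\textbf{Main obstacle.} The divisor bound $O(d(n))$ is not uniform in $n$, whereas the theorem demands a single constant $c=c(M)$. Refining the count is the heart of the matter: for two candidates $\zeta_1,\zeta_2$ of common norm, equating their full Neumann--Zagier series is an analytic identity in the transcendental coefficients $a_k$. A transcendence argument --- showing that the $a_k$ are sufficiently independent over $K$ --- would then force the identity to collapse to a bounded, purely algebraic relation in $K$, leaving at most $c(M)$ solutions. This is precisely the "transcendental nature of the Neumann--Zagier formula" signalled in the introduction; the quadratic hypothesis is used both to discretize the leading term and to situate the higher-order constraints inside a single number field, where Baker-type tools apply.
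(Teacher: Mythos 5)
Your opening reductions are sound: Thurston's Dehn surgery theorem restricts attention to large fillings, the quadratic hypothesis makes $|p+qc_1|^2$ a positive-definite binary quadratic form with values in a fixed rational lattice $\tfrac{1}{D}\mathbb{Z}$, and comparing leading Neumann--Zagier terms forces $\bigl||A|^2-|A'|^2\bigr|=O(1)$, hence at most $O(1)$ possible norm values for a matching filling. But you then stop exactly at the hard part and say so: for a fixed norm $n$, the number of coprime representations can be as large as the divisor function $d(n)$, which is unbounded, whereas the theorem requires one constant $c=c(M)$. The ``transcendence argument via Baker-type tools'' you invoke to close this gap is a placeholder, not an argument, and it is unclear what such tools would bite on: the coefficients $c_3,c_5,\dots$ enter the volume formula as Taylor coefficients of a power series in $1/A$, not as linear forms in logarithms, and although they are in fact algebraic (coming from the gluing variety), their algebraicity alone gives no obvious way to separate representations of the same norm. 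As written, the proposal is a correct account of the easy half together with an honest acknowledgement that the essential half is missing.

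The mechanism actually used in [J2] --- and flagged in the introduction of this paper --- is o-minimality rather than transcendence theory. The Neumann--Zagier function $\Theta_M(x,y)$ is definable in $\mathbb{R}_{an}$ (the lemma quoted from [J2] in Section \ref{prel}), hence so is the family of coincidence sets cut out by $\Theta_M(x,y)=\Theta_M(x',y')$ together with the norm constraint. Proposition \ref{finiteness}, the o-minimal finiteness result, then bounds the number of connected components of the fibers uniformly over the base, and it is this uniformity --- not any number-theoretic input --- that beats the divisor function. The quadratic hypothesis is used exactly where you used it, to discretize $|A|^2$; the uniform count of lattice solutions at a fixed norm is supplied by definability. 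To salvage your arithmetic route you would need a transcendence statement about the $c_n$ strong enough to separate representations of the same norm with uniform effectivity; absent that, the o-minimal argument is what actually closes the proof.
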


More quantitatively, the following conjecture was proposed in \cite{J2}:
\begin{conj}
Let $M$ be a $1$-cusped hyperbolic $3$-manifold. Then there exists a finite subgroup $G$ of $\textnormal{GL}_2(\mathbb{Q})$ such that if 
\begin{equation*}
\textnormal{vol}\,M(p',q')=\textnormal{vol}\,M(p,q)
\end{equation*}
where $M(p',q')$ and $M(p,q)$ are two Dehn fillings of $M$, then 
\begin{equation*}
\begin{pmatrix} p' \\ q' \end{pmatrix}=\sigma \begin{pmatrix} p \\ q \end{pmatrix}
\end{equation*}
for some $\sigma \in G$.  
\end{conj}

In this paper, we present experimental evidence supporting the above conjecture and offer a sharper, more precise formulation of it. Along the way, we also establish several theoretical findings, both conditional and unconditional, that either support the conjecture or connect it to other known results.

\subsection{Experimental results}

There is a census compiled by \cite{HW}, \cite{CHW}, \cite{Thi}, and \cite{Bur}, incorporated into \texttt{SnapPy} \cite{CDGW}, a software application designed to analyze the geometry and topology of 3-manifolds. The census consists of 59,107 1-cusped hyperbolic 3-manifolds, obtained by gluing 9 or fewer tetrahedra.

We compute and compare the volumes of $M(p,q)$ for all census manifolds $M$ and all pairs $(p,q)$ with $|p|, |q|\leq100$. To make the numerical estimation of $N_M$ easier, define
\begin{align*}
    N^{100}_M(v)&:=\left|\left\{\pm(p,q):\textnormal{vol}\,M(p,q)=v \text{ and } |p|,|q|\leq100\right\}\right|,\\
    \tilde{N}^{100}_M&:=\max\{n:N^{100}_M(v)=n \text{ for two or more values of $v$}\}.
\end{align*}

For example, the manifold m015\footnote{Here we use SnapPy's notation; see Section \ref{exp} for its detailed explanation.} satisfies $\textnormal{vol\,m015}(-3,2)=\textnormal{vol\,m015}(7,1)$ for this unique pair of fillings within the given range. Hence, we define $\tilde{N}^{100}_{\textnormal{m015}}=1$. The results are summarized in the table below.

\begin{table}[htb]
    \centering
    \begin{tabular}{c|c}
        $\tilde{N}^{100}_M$ & Number of Manifolds \\ \hline\hline
        6 & 1 \\
        4 & 42 \\
        2 & 171 \\
        1 & 58,893 \\ \hline
        Total & 59,107
    \end{tabular}
    \caption{Distribution of $\tilde{N}^{100}_M$ values for census manifolds.}
    \label{disNM}
\end{table}

From the table, we observe that, generically - with more than $99.5\%$ probability - a randomly chosen $1$-cusped hyperbolic $3$-manifold $M$ has its Dehn filling uniquely determined by its volume. However, there are clear exceptions to this phenomenon, which we aim to analyze and for which we seek to provide a theoretical framework.

Before proceeding to the next subsection, we briefly review prior research related to the paper. In \cite{BPZ}, Betley,  Przytycki, and Zukowski showed that the manifold m009 satisfies $\textnormal{vol m009}(p,q)=\textnormal{vol m009}(-p, q)$ for any coprime pair $(p,q)$. In \cite{HMW}, Hodgson, Meyerhoff, and Weeks identified infinitely many pairs of 1-cusped hyperbolic 3-manifolds, where each pair consists of manifolds with the same (complex) volume, obtained by Dehn filling on one cusp of the Whitehead link complement. In \cite{NY}, Neumann and Yang found a manifold, v3066, that generates an infinite family of four Dehn fillings with the same volume. See also \cite{CGHN}. We will revisit these examples in Sections \ref{exp} and \ref{exm}.

\subsection{Theoretical predictions \& confirmations}\label{25033102}

To interpret the experimental results, we first define the \textit{complex volume} of $M$ as
\[\textnormal{vol}_{\mathbb{C}}\,M := \textnormal{vol}\, M + i \textnormal{CS}\, M\quad \mod i\pi^2\mathbb{Z}\]
where CS represents the \textit{Chern-Simons} invariant \cite{NZ, T2, Y}. One may then ask questions analogous to Questions \ref{MainQ}–\ref{MainQ2} for the complex volume. In particular, it is natural to explore how the Chern-Simons invariants behave for two Dehn fillings having the same (real) volume. 

Having closely examined all 214 census manifolds with $\tilde{N}^{100}_{M} >1$ in Table \ref{disNM}, we find that all of them appear to support the following conjecture:

\begin{conj}\label{25030605}
Let $M$ be a $1$-cusped hyperbolic $3$-manifold. If 
$$\textnormal{vol}\, M(p',q')= \textnormal{vol}\, M(p,q)$$
for sufficiently large $|p'|+|q'|$ and $|p|+|q|$, then there exists $n\in \mathbb{N}$ such that either 
$$\textnormal{vol}_{\mathbb{C}}\, M(p',q')=\textnormal{vol}_{\mathbb{C}}\, M(p,q)\quad\mod \frac{i\pi^2}{n}\mathbb{Z}$$
or
$$\textnormal{vol}_{\mathbb{C}}\,M-\textnormal{vol}_{\mathbb{C}}\,M{(p',q')}=\overline{\textnormal{vol}_{\mathbb{C}}\,M-\textnormal{vol}_{\mathbb{C}}\,M{(p,q)}}\quad\mod \frac{i\pi^2}{n}\mathbb{Z}$$
holds.
\end{conj}

To explain the numerical values of $\tilde{N}^{100}_{M}$ appearing Table \ref{disNM} in light of Conjecture \ref{25030605}, let $\mathcal{X}_{ana}$ denote the analytic holonomy set of $M$, which is an analytic curve in $\mathbb{C}^2(:=(u, v))$ parametrizing the deformation space of $M$. Its precise definition will be given in Section \ref{Holvar}. For $\sigma\in\text{\normalfont GL}_2(\mathbb{Q})$, we consider $\sigma$ as an element of $\textnormal{Aut}\,\mathcal{X}_{ana}$, the automorphism group of $\mathcal{X}_{ana}$, if the following map
\begin{equation*}
\begin{aligned}
\mathcal{X}_{ana}&\longrightarrow \mathcal{X}_{ana}\\
\begin{pmatrix} u \\ v \end{pmatrix}&\mapsto \sigma\begin{pmatrix} u \\ v \end{pmatrix}
\end{aligned}
\end{equation*}
is an isomorphism. Under this notation, the following theorem is induced:

\begin{thm}\label{25031903}
Let $M$ be a $1$-cusped hyperbolic $3$-manifold and $\mathcal{X}_{ana}$ be its analytic holonomy set. Let $M(p',q')$ and $M(p,q)$ be two Dehn fillings of $M$ with sufficiently large $|p'|+|q'|$ and $|p|+|q|$. Then the following two statements are equivalent. 
\begin{enumerate}
\item $\textnormal{vol}_{\mathbb{C}}\,M(p',q')=\textnormal{vol}_{\mathbb{C}}\,M(p,q)$ modulo $\frac{i\pi^2}{n} \mathbb{Z}$ where $n\in \mathbb{N}$;  
\item There exists $\sigma:\mathcal{X}_{ana}\xrightarrow{\cong} \mathcal{X}_{ana}$ such that the Dehn filling point of $M(p',q')$ on $\mathcal{X}_{ana}$ corresponds under $\sigma$ to the Dehn filling point of $M(p,q)$ on $\mathcal{X}_{ana}$. Moreover, $\begin{pmatrix}p'\\q'\end{pmatrix}=(\sigma^T)^{-1}  \begin{pmatrix}p\\q\end{pmatrix}$.
\end{enumerate}
In particular, the automorphism group of $\mathcal{X}_{ana}$, consisting of elements arising from the second statement above, is a cyclic group of order $6$ (resp. $4$) when the cusp shape $c_1\in \mathbb{Q}(\sqrt{-3})$ (resp. $\mathbb{Q}(\sqrt{-1})$), and of order $2$ otherwise.  
\end{thm}
Note that, among the elements of $\textnormal{Aut}\,\mathcal{X}_{ana}$ mentioned above, half differ only by a sign. Since we count $M(p,q)$ and $M(-p,-q)$ as the same, one immediate corollary of the previous theorem is 
\begin{cor}\label{25041801}
Let $M$ be a $1$-cusped hyperbolic $3$-manifold and $c_1$ be its cusp shape. If $N_M^{\mathbb{C}}(v)$ denotes the number of hyperbolic Dehn fillings of $M$ with a complex volume $v\in \mathbb{C}$ (modulo $i\pi^2\mathbb{Z}$), then 
\begin{align*}
    N^{\mathbb{C}}_M(v)\leq\begin{cases}
            3 & \text{if $c_1\in \mathbb{Q}(\sqrt{-3})$},\\
            2 & \text{if $c_1\in \mathbb{Q}(\sqrt{-1})$},\\
            1 & \text{otherwise}
         \end{cases}
\end{align*}
for $v$ sufficiently close to $\textnormal{vol}_{\mathbb{C}}\,M$ (modulo $i\pi^2\mathbb{Z}$). 
\end{cor}
In Section \ref{exm}, we provide examples where equality holds for each case in Corollary \ref{25041801}.   

On the other hand, to incorporate with the second conjugate case in Conjecture \ref{25030605}, we define $\overline{\mathcal{X}}_{ana}$ to be the conjugate set of $\mathcal{X}_{ana}$. That is, for $\mathcal{X}_{ana}$ given as
\begin{equation*}
v=\sum_{i=1}^{\infty}c_i u^i, 
\end{equation*}
$\overline{\mathcal{X}}_{ana}\subset\mathbb{C}^2(:=(\overline{u}, \overline{v}))$ is defined as 
\begin{equation}\label{25042801}
\overline{v}=\sum_{i=1}^{\infty}\overline{c}_i \overline{u}^i 
\end{equation}
where $\overline{c_i}$ is the complex conjugate of $c_i$. Indeed, both $\mathcal{X}_{ana}$ and $\overline{\mathcal{X}}_{ana}$ arise from two distinct branches of the same algebraic curve, the holonomy variety of $M$. See Section \ref{Holvar}. 

Our second main theorem of the paper is the following:

\begin{thm}\label{25041802}
Let $M,M(p',q'),M(p,q)$, and $\mathcal{X}_{ana}$ be the same as in Theorem \ref{25031903}. Then the following two statements are equivalent. 
\begin{enumerate}
\item $\textnormal{vol}_{\mathbb{C}}\,M-\textnormal{vol}_{\mathbb{C}}\,M{(p',q')}=\overline{\textnormal{vol}_{\mathbb{C}}\,M-\textnormal{vol}_{\mathbb{C}}\,M{(p,q)}}$ modulo $\frac{i\pi^2}{n}\mathbb{Z}$;

\item There exists $\sigma :\mathcal{X}_{ana}\xrightarrow{\cong} \overline{\mathcal{X}}_{ana}$ such that the Dehn filling point of $M(p',q')$ on $\mathcal{X}_{ana}$ corresponds under $\sigma$ to the Dehn filling point of $M(p,q)$ on $\overline{\mathcal{X}}_{ana}$. Moreover, $\begin{pmatrix}p'\\q'\end{pmatrix}=(\sigma^T)^{-1}  \begin{pmatrix}p\\q\end{pmatrix}$.
\end{enumerate}
\end{thm}

If $\sigma_1, \sigma_2 (\in \textnormal{GL}_2(\mathbb{Q}))$ are isomorphisms from $\mathcal{X}_{ana}$ to $\overline{\mathcal{X}}_{ana}$, then $\sigma_2^{-1}\circ\sigma_1$ is an element of $\textnormal{Aut}\,\mathcal{X}_{ana}$. Hence the following statement is attained from the final observation in Theorem \ref{25031903}:

\begin{cor}\label{ano3}
Let $M, \mathcal{X}_{ana}$, and $\overline{\mathcal{X}}_{ana}$ be the same as before. If the cusp of $M$ is contained in $\mathbb{Q}(\sqrt{-3})$ (resp. $\mathbb{Q}(\sqrt{-1})$), then the number of isomorphisms from $\mathcal{X}_{ana}$ to $\overline{\mathcal{X}}_{ana}$ is at most $6$ (resp. $4$); otherwise, at most $2$.  
\end{cor}

In particular, this, together with Conjecture \ref{25030605}, yields 
\begin{cor}\label{25031904}
Let $M$ be a $1$-cusped hyperbolic $3$-manifold and $c_1$ be its cusp shape. Assuming Conjecture \ref{25030605}, we have
\begin{align*}
    N_M(v)\leq\begin{cases}
            6 & \text{if $c_1\in \mathbb{Q}(\sqrt{-3})$},\\
            4 & \text{if $c_1\in \mathbb{Q}(\sqrt{-1})$},\\
            2 & \text{otherwise}
         \end{cases}
\end{align*}
for $v(\in \mathbb{R})$ sufficiently close to $\textnormal{vol}\,M$.
\end{cor}
The above corollary clearly explains the numbers appearing in the left column of Table \ref{disNM}. If $M$ is the manifold m208 or m135, for instance, then $N_M(v)=6$ or $4$ respectively. See also Section \ref{exm}.

\subsection{Pila–Wilkie}

Another possible approach to Question~\ref{MainQ} is number-theoretic in nature. More precisely, in \cite{J2}, the first author asked whether the Pila–Wilkie counting theorem (Theorem~\ref{PWcount}) can be applied to generalize Theorem~\ref{realquad} to other cusp shapes. Below, we establish the following theorem, whose statement is reminiscent of that of Pila–Wilkie.

\begin{thm}\label{slow}
Let $M$ be a 1-cusped hyperbolic 3-manifold whose cusp shape is not quadratic. For any $\varepsilon>0$, there exists a constant $c>0$ such that
\begin{align*}
N_M(\textnormal{vol}\,M(p,q))\leq c(|p|+|q|)^\varepsilon
\end{align*}
for all $(p,q)\in\mathbb{Z}^2$.
\end{thm}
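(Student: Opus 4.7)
The strategy is to apply the Pila--Wilkie counting theorem (Theorem \ref{PWcount}) to the four-dimensional set
\[
S \;:=\; \{(x,y,z,w)\in\mathbb{R}^4 : \textnormal{vol}\,M(x,y)=\textnormal{vol}\,M(z,w)\},
\]
exploiting the non-algebraicity guaranteed by Theorem \ref{transc2}. As a preliminary I would verify that the extended volume function $\textnormal{vol}\,M(x,y)$, given by the Neumann--Zagier expansion (\ref{volseries}) for $|x|+|y|$ large, is definable in a suitable o-minimal structure such as $\mathbb{R}_{\mathrm{an},\exp}$; a convenient way is to reparametrize by $u=1/(x+c_1 y)$ and check that $\textnormal{vol}\,M$ extends to a restricted real-analytic function near the origin in the new coordinates.

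Now fix $(p,q)\in\mathbb{Z}^2$ with $T:=|p|+|q|$ sufficiently large. The Neumann--Zagier asymptotic $\textnormal{vol}\,M-\textnormal{vol}\,M(p,q)=\Theta(|p+c_1 q|^{-2})$ implies that any $(p',q')\in\mathbb{Z}^2$ with $\textnormal{vol}\,M(p',q')=\textnormal{vol}\,M(p,q)$ must satisfy $|p'|+|q'|=O(T)$. Consequently every $(p',q')$ counted by $N_M(\textnormal{vol}\,M(p,q))$ produces an integer point $(p,q,p',q')\in S$ of height at most $c_0 T$ for some absolute constant $c_0$.

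Since the hypothesis that $c_1$ is not quadratic forces $c_1\notin\mathbb{Q}(i)$, Theorem \ref{transc2} tells us that $S$ is not algebraic. Writing $S^{\mathrm{alg}}$ for the union of all connected positive-dimensional semi-algebraic subsets of $S$ and $S^{\mathrm{trans}}=S\setminus S^{\mathrm{alg}}$, Pila--Wilkie supplies, for each $\varepsilon>0$, a constant $c'$ such that $S^{\mathrm{trans}}$ contains at most $c'(c_0 T)^{\varepsilon}$ integer points of height $\leq c_0 T$. Because the integer points above our fixed $(p,q)$ form a subfamily of these, the transcendental contribution to $N_M(\textnormal{vol}\,M(p,q))$ is already $O(T^{\varepsilon})$.

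For the algebraic contribution, $\dim S^{\mathrm{alg}}\leq 2$ (otherwise $S$ would be algebraic), so the fiber of $S^{\mathrm{alg}}$ over a fixed $(p,q)$ is generically $0$-dimensional. Using the classification of the algebraic symmetries coming from Theorems \ref{main} and \ref{mainconju} together with the non-algebraicity of $S$, one expects this fiber to have cardinality bounded by a constant $K$ independent of $(p,q)$, giving $N_M(\textnormal{vol}\,M(p,q))\leq K+O(T^{\varepsilon})=O(T^{\varepsilon})$. The main obstacle I anticipate is this uniform boundedness of the algebraic fibers: a priori $S^{\mathrm{alg}}$ could contain irreducible components whose projection to the $(x,y)$-plane is a lower-dimensional algebraic set passing through many integer points, and ruling this out may require either a structural description of $S^{\mathrm{alg}}$ or passage to the definable-family version of Pila--Wilkie applied to $\{S_{(x,y)}\}_{(x,y)}$ with $S_{(x,y)}:=\{(z,w):\textnormal{vol}\,M(z,w)=\textnormal{vol}\,M(x,y)\}$, from which the uniform transcendental count can be extracted directly; checking definability of $\textnormal{vol}\,M$ in $\mathbb{R}_{\mathrm{an},\exp}$ is a secondary technical point which I expect to be routine given the Neumann--Zagier formula.
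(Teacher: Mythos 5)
Your strategy---applying Pila--Wilkie to the four-dimensional definable set $S=\{\textnormal{vol}\,M(x,y)=\textnormal{vol}\,M(z,w)\}$ and invoking the non-algebraicity from Theorem~\ref{transc2}---identifies the right tools, but the plan has a genuine gap that you honestly flag and then underestimate. Theorem~\ref{transc2} only tells you $S$ is not a hypersurface; it gives no control on $S^{\mathrm{alg}}$, which can perfectly well be two-dimensional (or could contain, for each $(p,q)$, an entire one-dimensional level curve $\{(z,w):\textnormal{vol}\,M(z,w)=\textnormal{vol}\,M(p,q)\}$). In that case the fiber of $S^{\mathrm{alg}}$ over $(p,q)$ is not ``generically $0$-dimensional with uniformly bounded cardinality''---it is a semialgebraic curve carrying arbitrarily many integer points, and Pila--Wilkie on $S^{\mathrm{tr}}$ says nothing about them. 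Saying ``one expects this fiber to have cardinality bounded by a constant $K$'' is exactly what needs to be proved, and it is the hard part.

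The paper's proof takes a route designed specifically to cope with this. It argues by contradiction: assume some level curve carries $\gg(|p|+|q|)^{\delta}$ rational points. Rather than using Theorem~\ref{PWcount} as a black box, it opens it up: Theorem~\ref{param} gives a uniformly bounded number of strong $k$-parametrizations of the (rescaled, bounded) level curve, and Theorem~\ref{hype} (the Bombieri--Pila determinant method) then covers the rational points of height $\le T$ by $cT^{\varepsilon}$ plane curves of degree $\le e$ \emph{whose defining polynomials have coefficients of height $\le(|p|+|q|)^N$}. By the o-minimal finiteness result (Proposition~\ref{finiteness}), either the level curve meets each such plane curve in a uniformly bounded number of points---contradiction---or it \emph{coincides} with one of them, i.e.\ it is itself a plane algebraic curve of bounded degree with height-bounded coefficients. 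In that second case the paper slices with $\{y=0\}$ and with a suitable line $\{(az,bz)\}$ (using the stronger output of the proof of Theorem~\ref{transc2}, recorded as Remark~\ref{curve_trans}) to produce algebraic points of bounded degree and controlled height on the fixed transcendental curve $Y=\{\Theta(x,0)=\Theta(az,bz)\}$, and then applies the algebraic-point counting theorem (Theorem~\ref{countingAlg}, BD Theorem~8.9) to derive a contradiction. Your ``plan B'' (the definable-family variant) gestures at the right idea, but it misses both the need for the explicit height bounds on the covering hypersurfaces and the passage to counting \emph{algebraic} (not merely rational) points on $Y$; without those ingredients the argument does not close.
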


According to this theorem, if $N_M(\textnormal{vol} \,M(p,q))$ is unbounded as a function of $p$ and $q$, then its growth rate is slower than any power of $|p|+|q|$, which strongly indicates that $N_M$ is actually a bounded function. 

At first glance, the theorem may appear to follow immediately from that of Pila–Wilkie. However, the situation is more subtle, and there are several nontrivial points (e.g., the transcendence of the Neumann–Zagier volume function) that  must be clarified in order to apply the theorem of Pila–Wilkie. We will elaborate more on this in Section \ref{SecSlow}.

\subsection{Application}\label{22071001}

There are a couple of ways in which our results can be applied to other classical problems in the study of hyperbolic 3-manifolds.

As a first application, we obtain
\begin{thm}\label{25050303}
Let $M$ be a $1$-cusped hyperbolic $3$-manifold and $\mathcal{X}$ be its holonomy variety. Suppose $\mathcal{X}$ is rational. For $n\in \mathbb{N}$, if 
\begin{equation*}
\textnormal{vol}_{\mathbb{C}}\,M{(p',q')}=\textnormal{vol}_{\mathbb{C}}\,M{(p,q)}\quad\mod \frac{i\pi^2}{n}\mathbb{Z}
\end{equation*}
with $|p'|+|q'|$ and $|p|+|q|$ sufficiently large, then the two Dehn fillings have the same Bloch invariant.
\end{thm}

The above theorem is an immediate corollary of Theorem \ref{25031903}, combined with Theorem 10.3 of A. Champanerkar in \cite{C}.  

Note that Theorem \ref{25050303} resolves a special case of the following conjecture of D. Ramakrishnan:
\begin{conj}[D. Ramakrishnan]
Let $M$ and $N$ be two hyperbolic $3$-manifolds. If 
\begin{equation*}
\textnormal{vol}_\mathbb{C}\,M=\textnormal{vol}_\mathbb{C}\,N\quad\mod i\pi^2\mathbb{Q}, 
\end{equation*}
then both $M$ and $N$ have the same Bloch invariant. 
\end{conj}

On the other hand, the orientation reversing analogue of Theorem \ref{25050303} is stated as follows:
\begin{thm}\label{25050304}
Let $M$ be a $1$-cusped hyperbolic $3$-manifold and $\mathcal{X}$ be its holonomy variety. Suppose $\mathcal{X}$ is rational. For $n\in \mathbb{N}$, if 
\begin{equation*}
\textnormal{vol}_{\mathbb{C}}\,M-\textnormal{vol}_{\mathbb{C}}\,M{(p',q')}=\overline{\textnormal{vol}_{\mathbb{C}}\,M-\textnormal{vol}_{\mathbb{C}}\,M{(p,q)}} \mod \frac{i\pi^2}{n}\mathbb{Z}
\end{equation*}
with $|p'|+|q'|$ and $|p|+|q|$ sufficiently large, then 
\begin{equation*}
\beta(M)-\beta(M{(p',q')})=-\overline{\beta(M)}+\overline{\beta(M{(p,q)})}
\end{equation*}
where $\beta(M)$ denotes the Bloch invariant of $M$ (and similarly for other terms).
\end{thm}

Since the scissors congruence group is a quotient of the Bloch group obtained by identifying each element with its orientation-reversed counterpart, due to the work of Dupont and Sah \cite{DS}, one immediate corollary of Theorems \ref{25050303} and \ref{25050304} is the following:
\begin{cor}\label{25050307}
Let $M$ be a $1$-cusped hyperbolic $3$-manifold and $\mathcal{X}$ be its holonomy variety. Suppose $\mathcal{X}$ is rational. Assuming Conjecture \ref{25030605}, if $M{(p',q')}$ and $M{(p,q)}$ are two Dehn fillings of $M$ having the same volume with $|p'|+|q'|$ and $|p|+|q|$ sufficiently large, then $M{(p',q')}$ is scissors congruent to $M{(p,q)}$. 
\end{cor}

The proof of Theorem \ref{25050304} follows from an elementary observation; however, we do not include it in this paper, as it requires additional background that goes far beyond what is needed for the proofs of the main theorems.

\subsection{Outline of the paper}

The paper is organized as follows. In Section \ref{prel}, we introduce key background concepts, including the holonomy variety and the Neumann-Zagier volume formula. In Section \ref{anosub}, we prove our first two main theorems, Theorems \ref{25031903} and \ref{25041802}. In Section \ref{exp}, we present numerical experiment data along with some theoretical interpretation. Sections \ref{SecTr}-\ref{SecSlow} are devoted to establishing our last main result, Theorem \ref{slow}: in Section \ref{SecTr}, we carry out preparatory work for the proof and, building on this, verify the theorem in Section \ref{SecSlow}. Finally, in Section \ref{exm}, we provide concrete examples with detailed proofs to illustrate the optimality of our theorems.

\subsection*{Acknowledgements}

The first author would like to thank Abhijit Champanerkar for an enlightening conversation, in particular for sharing his motivation and research background from the early 2000s, when his thesis, \cite{C}, was written. 

The authors would like to thank Seewoo Lee for allowng the use of his proof of Lemma \ref{numb1}. 

\section{Preliminaries}\label{prel}

\subsection{Holonomy variety: algebraic \& analytic }\label{Holvar}

Let $M$ be a 1-cusped hyperbolic 3-manifold, and $\mathcal{T}$ be an ideal triangulation of $M$ with shape parameters $z_1, \cdots, z_n$ assigned to its ideal tetrahedra. The \textit{gluing variety} $G(\mathcal{T})(\subset \mathbb{C}^n)$ of $M$, defined by equations in $z_1, \cdots, z_n$, encodes the conditions under which the $n$ tetrahedra in $\mathcal{T}$ are glued together along their edges to yield a (possibly incomplete) hyperbolic structure on $M$. It is roughly regarded as the space of all such structures, with each point in $G(\mathcal{T})$ corresponding to a $\text{PSL}_2(\mathbb{C})$ representation of $\pi_1(M)$. 

Let $T^2$ be a torus cross-section of the cusp and $(\mathbf{m}, \mathbf{l})$ be a chosen basis of $H_1(T^2)$. Let $m$ and $l$ be the squares of the eigenvalues of the images of $\mathbf{m}$ and $\mathbf{l}$ under the representation corresponding to a point in $G(\mathcal{T})$. Then $\mathbf{m}$ and $\mathbf{l}$ are expressed as rational functions on $G(\mathcal{T})$, which leads to the following natural map: $$\mathrm{Hol}:G(\mathcal{T})\longrightarrow(m,l).$$

The \textit{holonomy variety} $\mathcal{X}$ of $M$ is then defined as the Zariski closure of the image of the map Hol. It is known that $\mathcal{X}$ is defined over $\mathbb{Q}$ and does not depend on the choice of triangulation $\mathcal{T}$. For further details, we refer the reader to \cite{CCGLS} or \cite{C}.

Among the irreducible components of the holonomy variety, there exists a distinguished one, that is, the \textit{geometric component} of it. This component is an algebraic curve in $\mathbb{C}^2$ with coordinates $(m,l)$ and contains $(1,1)$, which corresponds to the complete hyperbolic structure of $M$.

Now let $u:=\log m$ and $v:=\log l$. There is a neighborhood $\mathcal{N}_\mathcal{X}$ of a branch of $\mathcal{X}$ containing $(1,1)$, that is locally biholomorphic to a complex manifold containing $(0,0)$ in $\mathbb{C}^2$ parametrized by $u$ and $v$. We term this the \textit{analytic holonomy set} and represent it by $\mathcal{X}_{ana}$. It is well-known that $v$ can be expressed as an odd analytic function of $u$, given by
\[v=\sum_{i=1}^\infty c_iu^i\quad(c_i=0\text{ for even $i$}).\]
Here $c_1$ is the \textit{cusp shape} of $M$ with respect to $\mathbf{m}, \mathbf{l}$, which lies in the upper half plane. Note that, since $\mathcal{X}$ is defined over $\mathbb{Q}$, the conjugate set $\overline{\mathcal{X}}_{ana}$ of $\mathcal{X}_{ana}$, given in \eqref{25042801}, is also biholomorphic to a branch of $\mathcal{X}$ containing $(1,1)$. 

For $(u,v)\in\mathcal{X}_{ana}$ near $(0,0)$, there exists the unique real pair $(p,q)\in\mathbb{R}^2$ satisfying $pu+qv=2\pi i$. The correspondence $u\mapsto (p,q)$ is a homeomorphism from a neighborhood of 0 in $\mathbb{C}$ onto a neighborhood of $\infty$ in the one point compactification $\mathbb{R}^2\cup\{\infty\}$. We call the pair $(p,q)$ the \textit{generalized Dehn filling coefficient} (See \cite{NZ} \S1 or \cite{NR} 6.2).

\subsection{Neumann-Zagier volume formula}\label{NZvf}

If a point of $\mathcal{X}_{ana}$ corresponds to an incomplete hyperbolic structure on $M$ and has a coprime integer pair $(p,q)$ as its generalized Dehn filling coefficient, then its completion is known to be isometric to the $(p,q)$-Dehn filling $M(p,q)$. We term such a point the $(p,q)$-\textit{Dehn filling point} on $\mathcal{X}_{ana}$ and the set of limit points added under the completion, which is known to form a geodesic, the \textit{core geodesic} $\boldsymbol{\gamma}{(p,q)}$ of $M(p,q)$. 

The volume of $M(p,q)$ is computed via the following expression, referred to as the \textit{Neumann-Zagier volume formula}:
\begin{align*}
\textnormal{vol} \,M(p,q)=\textnormal{vol}\, M&-\text{Im}(c_1)\frac{\pi^2}{|A|^2}+\frac{1}{4}\text{Im}\Biggl[\frac{1}{2}c_3\left(\frac{2\pi}{A}\right)^4+\frac{1}{3}\left(3c_3^2\frac{q}{A}-c_5\right)\left(\frac{2\pi}{A}\right)^6\cdots\Biggr]
\end{align*}
where $A:=p+c_1q$. To simplify notation, we set\footnote{We will often omit the subscript $M$ in $\Theta_M$ when the context makes it clear.}
 
$$\Theta_M(p,q):=\textnormal{vol} \,M(p,q)-\textnormal{vol} \,M.$$ 

Changing the basis $\mathbf{m}, \mathbf{l}$ of $H_1(T^2)$ by $$\tilde{\mathbf{m}}:=\mathbf{m}^a\mathbf{l}^b, \qquad\tilde{\mathbf{l}}:=\mathbf{m}^c\mathbf{l}^d$$ where 
$\begin{pmatrix}a & b\\ c & d\end{pmatrix}\in \text{SL}_2(\mathbb{Z})$, and letting 
$$\tilde{u}:=au+bv, \quad \tilde{v}:=cu+dv,$$ it follows that 
$$\tilde{v}=\tilde{c_1}\tilde{u}+\tilde{c_3}\tilde{u}^3+\cdots$$ where
\begin{align*}
    \tilde{c_1}=\frac{c+dc_1}{a+bc_1}, \quad\tilde{c_3}=\frac{c_3}{(a+bc_1)^4},\quad \cdots.
\end{align*}
The $(p,q)$-Dehn filling with respect to $\mathbf{m}, \mathbf{l}$ then becomes the $(\tilde{p},\tilde{q}):=(dp-cq,-bp+aq)$-Dehn filling with respect to $\tilde{\mathbf{m}}, \tilde{\mathbf{l}}$, and $\tilde{\Theta}(\tilde{x},\tilde{y}):=\textnormal{vol} \,M{(\tilde{p},\tilde{q})}-\textnormal{vol} \,M$ is given by 
\begin{align}
    \tilde{\Theta}(\tilde{p},\tilde{q})&=-\text{Im}(\tilde{c_1})\frac{\pi^2}{|\tilde{A}|^2}+\frac{1}{4}\text{Im}\Biggl[\frac{1}{2}\tilde{c_3}\left(\frac{2\pi}{\tilde{A}}\right)^4+\frac{1}{3}\left(3\tilde{c_3}^2\frac{\tilde{q}}{\tilde{A}}-\tilde{c_5}\right)\left(\frac{2\pi}{\tilde{A}}\right)^6\cdots\Biggr]\label{Thetatilde}
\end{align}
where $\tilde{A}:=\tilde{p}+\tilde{c_1}\tilde{q}$. This basis change will play a key role in the proofs of Theorems \ref{main2}, \ref{main3}, and \ref{transc2}. 

\subsection{Complex length and volume}

The \textit{complex length} $\ell(\boldsymbol{\gamma})$ of a geodesic $\boldsymbol{\gamma}$ is defined to be $length(\boldsymbol{\gamma})+i\cdot torsion(\boldsymbol{\gamma})$. For the $(p,q)$-Dehn filling of $M$, if we let $\boldsymbol{\gamma}=\boldsymbol{\gamma}{(p,q)}$, then it is computed via the following lemma. 

\begin{lem}[\cite{NZ}, Lemma 4.2] \label{lemNZ}
Let $r$ and $s$ be integers satisfying $\det \begin{pmatrix}p & q \\r &s\end{pmatrix}=1$. Then $\ell(\boldsymbol{\gamma})=-(ru+sv)$ modulo $2\pi i\mathbb{Z}$.
\end{lem}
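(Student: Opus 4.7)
The plan is to apply a change of basis of $H_1(T^2)$ corresponding to the matrix $\begin{pmatrix}p & q\\ r & s\end{pmatrix}\in\mathrm{SL}(2,\mathbb{Z})$, so that $\mathbf{m}'=p\mathbf{m}+q\mathbf{l}$ becomes the meridian of the $(p,q)$-Dehn filling and $\mathbf{l}'=r\mathbf{m}+s\mathbf{l}$ a corresponding longitude. Because the cusp subgroup is abelian, derivatives of cusp holonomies multiply, so the transformed deformation parameters are
\[u'=pu+qv,\qquad v'=ru+sv.\]

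Next I would invoke the definition of the generalized Dehn filling coefficient recalled in Section~\ref{NZvf}: the $(p,q)$-filling is precisely the condition $pu+qv=2\pi i$, which makes the derivative $e^{u'}$ of the meridian holonomy equal to $1$. By Thurston's theory, in the completed (filled) structure the holonomy of $\mathbf{m}'$ becomes a rotation by $2\pi$ about the core geodesic $\boldsymbol{\gamma}$ and therefore descends to the identity in $\mathrm{PSL}(2,\mathbb{C})$, while the holonomy of $\mathbf{l}'$ acts as a loxodromic isometry with axis $\boldsymbol{\gamma}$. A direct computation in $\mathrm{PSL}(2,\mathbb{C})$ shows that the derivative of such a loxodromic element at its attracting fixed point on $\partial\mathbb{H}^{3}$ is $e^{-\ell}$, where $\ell$ is its complex length. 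Hence $e^{-\ell(\boldsymbol{\gamma})}=e^{v'}=e^{ru+sv}$, i.e.\
\[\ell(\boldsymbol{\gamma})\equiv -(ru+sv)\pmod{2\pi i\,\mathbb{Z}}.\]

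The delicate step will be pinning down the sign, which I would verify against the Neumann--Zagier series already recorded in the paper. Using $u=\tfrac{2\pi i}{A}+O(|A|^{-3})$ and $v=c_{1}\tfrac{2\pi i}{A}+O(|A|^{-3})$ with $A=p+c_{1}q$, a short calculation exploiting $ps-qr=1$ gives
\[\mathrm{Re}(ru+sv)=-\frac{2\pi\,\mathrm{Im}(c_{1})}{|A|^{2}}+O(|A|^{-3}),\]
which is negative for all sufficiently large $|p|+|q|$. Since $\mathrm{Re}(\ell(\boldsymbol{\gamma}))$ is the (positive) length of the short core geodesic, the ambiguity $\pm(ru+sv)$ must be resolved with the minus sign, confirming the formula. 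The main obstacle throughout is a careful reconciliation of orientation and sign conventions between Thurston's deformation parameters $(u,v)$ and the standard complex length in $\mathrm{PSL}(2,\mathbb{C})$; the leading-order computation above is what fixes this unambiguously.
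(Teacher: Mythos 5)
The paper does not give its own proof of this lemma; it is quoted verbatim from Neumann--Zagier \cite{NZ}, so there is no in-text argument to compare against. Your reconstruction is correct and is essentially the intended one: the peripheral subgroup shares a fixed point, so logarithms of derivatives of the holonomies transform linearly under the $\mathrm{SL}(2,\mathbb{Z})$ basis change; the filling condition $pu+qv=2\pi i$ makes the holonomy of $\mathbf{m}'=p\mathbf{m}+q\mathbf{l}$ trivial in $\mathrm{PSL}(2,\mathbb{C})$, whence $\mathbf{l}'=r\mathbf{m}+s\mathbf{l}$ is loxodromic with axis a lift of $\boldsymbol{\gamma}$; and the derivative of $\rho(\mathbf{l}')$ at the tracked fixed point gives $\ell(\boldsymbol{\gamma})=\mp(ru+sv)\ (\textnormal{mod } 2\pi i\mathbb{Z})$. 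You also correctly isolate the one delicate point (whether the $(u,v)$ convention tracks the attracting or repelling fixed point) and settle it by a leading-order sign check. One detail you should make explicit in that last step: with $r,s$ chosen of size $O(|A|)$, multiplying the coarse error $O(|A|^{-3})$ in $u,v$ by $r$ or $s$ only yields $O(|A|^{-2})$, which is the \emph{same} order as the claimed main term $-2\pi\,\textnormal{Im}(c_1)/|A|^2$, so the sign would remain undetermined. One must go to the next term of the Neumann--Zagier expansion, where the coefficients $-c_3\,q/A$ in $u$ and $+c_3\,p/A$ in $v$ combine via $ps-qr=1$ into $c_3/A\cdot(2\pi i/A)^3=O(|A|^{-4})$, leaving a genuinely subleading error. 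You gesture at this with ``exploiting $ps-qr=1$,'' but that cancellation is where the sign determination actually lives and deserves to be written out.
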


Since $ps-qr=1$ and $pu+qv=2\pi i$, it follows that
\begin{align*}
    \ell(\boldsymbol{\gamma})=-\frac{v}{p}-\frac{2\pi i r}{p}=\frac{u}{q}-\frac{2\pi i s}{q}\quad\mod 2\pi i\mathbb{Z}.
\end{align*}

By \cite{Y} (See also \cite{N} below (3.6)), there exists a complex analytic function $f(u)$ near $u=0$ such that $f(0)=0$ and
\begin{align}
f(u)=\textnormal{vol}_{\mathbb{C}}\,M(p,q)-\text{vol}_{\mathbb{C}}\,M+\frac{\pi}{2}\ell(\boldsymbol{\gamma})\quad\mod i\pi^2\mathbb{Z}.\label{Nzf1}
\end{align}
Note that the function $f$ in the notation of \cite{NZ} is $i$ times $f$ here. By (61) of \cite{NZ},
\begin{align}
f(u)=\frac{i}{4}\left[
\frac{1}{2}c_3\left(\frac{2\pi}{A}\right)^4+\frac{2}{3}\left(3c_3^2\frac{q}{A}-c_5\right)\left(\frac{2\pi}{A}\right)^6+\cdots\right],\label{Nzf2}
\end{align}
hence $|f(u)|=O(|A|^{-4})$ for sufficiently large $|p|+|q|$. Consequently, the complex length $\ell(\boldsymbol{\gamma})$, up to scaling, is the majority part of the complex volume change.

From (58) of \cite{NZ}, we have
\begin{align*}
\ell(\boldsymbol{\gamma})=\frac{u}{q}-\frac{2\pi i s}{q}=\frac{2\pi i}{qA}-\frac{2\pi i s}{q}-\frac{c_3}{A}\left(\frac{2\pi i}{A}\right)^3+\cdots\quad\mod 2\pi i\mathbb{Z}
\end{align*}
for sufficiently large $|p|+|q|$, and, as a result,
\begin{align*}
\textnormal{vol}_{\mathbb{C}}\,M(p,q)=\textnormal{vol}_{\mathbb{C}}\,M-\frac{i\pi^2 }{qA}+\frac{i\pi^2 s}{q}+\cdots\quad\mod i\pi^2 \mathbb{Z},
\end{align*}
and
\begin{align}
\left|-\textnormal{vol}_{\mathbb{C}}\,M(p,q)+\textnormal{vol}_{\mathbb{C}}\,M-\frac{i\pi^2 }{qA}+\frac{i\pi^2 s}{q}\right|=O(|A|^{-4})\label{cvol}
\end{align}
for some suitable choice of complex volume with sufficiently large $|p|+|q|$.

The following lemma is straightforward.

\begin{lem}\label{Anear}
Having the same notation above, let $A':=p'+c_1q'$. If $\textnormal{vol}\,M(p,q)= \textnormal{vol}\,M{(p',q')}$ with sufficiently large $|p|+|q|$, then $O(|A|)=O(|A'|)$ and $O(|p|+|q|)=O(|p'|+|q'|)$.
\end{lem}

\section{Proofs of main I and II}\label{anosub}

In this section, we verify our first two main theorems, Theorems \ref{25031903} and \ref{25041802}.

\subsection{Proof of Theorem \ref{25031903}}
Before proving Theorem \ref{25031903}, we first state the following theorem regarding the relation between the complex volume and the complex length of the core geodesic. 

\begin{thm}\label{main2}
Let $M$ be a 1-cusped hyperbolic 3-manifold. Given $n\in\mathbb{N}$,
\begin{equation}\label{CVrational}
\textnormal{vol}_{\mathbb{C}}\,M{(p,q)}=\textnormal{vol}_{\mathbb{C}}\,M{(p',q')} 
\quad\mod\frac{i\pi^2}{n}\mathbb{Z}
\end{equation}
is equivalent to
$$\ell(\boldsymbol{\gamma}(p,q))=\ell(\boldsymbol{\gamma}(p',q'))\quad  \mod\frac{2\pi i}{n}\mathbb{Z}$$
for sufficiently large $|p|+|q|$.
\end{thm}

To prove the theorem, we use the hyperbolic metric on $\mathbb{H}^2=\{z\in\mathbb{C}:\textnormal{Im}(z)>0\}=\{(x,y)\in\mathbb{R}^2:y>0\}$, given by $ds^2=\frac{dx^2+dy^2}{y}$. The following lemma is basic. 

\begin{lem}\label{2dimhyp}
Let $z_0=x_0+iy_0\in\mathbb{H}^2$ and $r>0$.
\begin{enumerate}
\item Consider the Euclidean circle $\{z\in\mathbb{H}^2:|z-z_0|=r\}$. Then, with respect to the hyperbolic metric, the farthest point from $z_0$ on the circle is $z_0-ri$, and the distance is given by $\textnormal{d}_{\mathbb{H}^2}(z_0,z_0-ri)=\log\frac{y_0}{y_0-r}$.
\item Consider the hyperbolic circle $\{z\in\mathbb{H}^2:\textnormal{d}_{\mathbb{H}^2}(z,z_0)=r\}$. Then, with respect to the Euclidean metric, the farthest point from $z_0$ on the circle is $x_0+e^ry_0i$, and the distance is given by $y_0(e^r-1)$.
\end{enumerate}
\end{lem}

Recall that the isometries of $\mathbb{H}^2$ are given by M\"obius transformations. The next lemma concerns the rigidity of these elements when the coefficients lie in $\tfrac{1}{n}\mathbb{Z}$ under a certain condition.

\begin{lem}\label{matrixfinite}
Let $c_1\in\mathbb{H}^2$ and $(p,q)$ be a coprime integer pair. Fix $n\in\mathbb{N}$ and let $\xi_{(p,q)}(z)=\frac{\alpha z+\beta}{\gamma z +\delta}$ for $\alpha,\beta,\gamma,\delta\in\frac{1}{n}\mathbb{Z}$ satisfying $\alpha\delta-\beta\gamma=1$. Then there are only finitely many choices for $\begin{pmatrix}\alpha & \beta \\ \gamma &\delta\end{pmatrix}$ satisfying
\begin{align}\label{25050101}
\textnormal{Im}(c_1-\xi_{(p,q)}(c_1))=O((|p|+|q|)^{-2})\quad\textnormal{and}\quad\textnormal{Re}(\xi_{(p,q)}(c_1))=O(1)
\end{align}
with sufficiently large $|p|+|q|$.
\end{lem}

\begin{proof}
By a simple calculation, 
\begin{align*}
\textnormal{Im}(\xi_{(p,q)}(c_1))=\textnormal{Im}\left(\frac{\alpha c_1+\beta}{\gamma c_1 +\delta}\right)=\textnormal{Im}\left(\frac{(\alpha c_1+\beta)(\gamma \overline{c_1} +\delta)}{|\gamma c_1 +\delta|^2}\right)=\frac{\textnormal{Im}(c_1)}{|\gamma c_1 +\delta|^2}. 
\end{align*}
Combining this with the first assumption in \eqref{25050101}, we get $1-\frac{1}{|\gamma c_1 +\delta|^2}=O((|p|+|q|)^{-2})$, that is, $|\gamma c_1 +\delta|$ is arbitrarily close to 1. Note that, since $c_1\notin\mathbb{R}$, there are only finitely many choices for $(\gamma, \delta)$ in $\frac{1}{n}(\mathbb{Z}\times\mathbb{Z})$. Further, if $(\alpha_0, \beta_0)\in\frac{1}{n}(\mathbb{Z}\times\mathbb{Z})$ where $\alpha_0\delta-\beta_0\gamma=1$, then any possible choice of $(\alpha, \beta)$ must be of the form $(\alpha_0+k\gamma, \beta_0+k\delta)$ for some $k\in\frac{1}{n\gamma}\mathbb{Z}$ or $\frac{1}{n\delta}\mathbb{Z}$.

Now,
\begin{align*}
\textnormal{Re}(\xi_{(p,q)}(c_1))&=\frac{\alpha\gamma|c_1|^2+(\alpha\delta+\beta\gamma)\textnormal{Re}(c_1)+\beta\delta}{|\gamma c_1 +\delta|^2}\\
&=\frac{\alpha_0\gamma|c_1|^2+(\alpha_0\delta+\beta_0\gamma)\textnormal{Re}(c_1)+\beta_0\delta+k|\gamma c_1+\delta|^2}{|\gamma c_1 +\delta|^2}=O(1).
\end{align*}
Consequently, for each $(\gamma, \delta)$, the number of possible choices of $k$ is finite, which completes the proof.
\end{proof}

\begin{proof}[Proof of Theorem \ref{main2}]
Suppose that \eqref{CVrational} holds for sufficiently large $|p|+|q|$. Let $A=p+c_1q$ and $A'=p'+c_1q'$. From \eqref{cvol} and Lemma \ref{Anear}, we can choose integers $r, s, r'$, and $s'$ satisfying $ps-qr=p's'-q'r'=1$ and
\begin{align*}
\left|-\frac{i\pi^2 }{qA}+\frac{i\pi^2 s}{q}+\frac{i\pi^2}{q'A'}-\frac{i\pi^2 s'}{q'}-\frac{i\pi^2k}{n}\right|=O(|A|^{-4})
\end{align*}
where $0\leq k<n$. 

Define $B=r+c_1s$ and $B'=r'+c_1s'$. Since
\begin{align*}
-\frac{1}{qA}+\frac{ s}{q}=\frac{-1+sA}{qA}=\frac{-(ps-qr)+s(p+c_1q)}{qA}=\frac{B}{A},
\end{align*}
it follows that
\begin{align}\label{25043003}
\left|\frac{B}{A}-\frac{B'}{A'}-\frac{k}{n}\right|=O(|A|^{-4}).
\end{align}

As $\begin{pmatrix}s & r \\q &p\end{pmatrix}\in \text{PSL}_2(\mathbb{Z})$, it acts as a M\"obius transformation $\sigma_{(p,q)}(z):=\frac{sz+r}{qz+p}$ on $\mathbb{H}^2$. If $\tau_{k/n}$ denotes $\begin{pmatrix}1 & k/n \\0 &1\end{pmatrix}$, then $\sigma_{(p,q)}(c_1)=\frac{B}{A}$, and so \eqref{25043003} implies
$$|\sigma_{(p,q)}(c_1)-\tau_{k/n}\circ\sigma_{(p',q')}(c_1)|<C|A|^{-4}$$
for some $C>0$. Since Im$(\sigma_{(p,q)}(c_1))=\textnormal{Im}(c_1)|A|^{-2}$, by Lemma \ref{2dimhyp} (1),
\begin{equation*}
\begin{aligned}
\textnormal{d}_{\mathbb{H}^2}\big(\sigma_{(p,q)}(c_1), \tau_{k/n}\circ\sigma_{(p',q')}(c_1)\big) &\leq \log\frac{\textnormal{Im}(c_1)|A|^{-2}}{\textnormal{Im}(c_1)|A|^{-2}-C|A|^{-4}}\\
\Longrightarrow \textnormal{d}_{\mathbb{H}^2}\big(c_1, \sigma_{(p,q)}^{-1}\circ\tau_{k/n}\circ\sigma_{(p',q')}(c_1)\big) &\leq \log\frac{\textnormal{Im}(c_1)|A|^{-2}}{\textnormal{Im}(c_1)-C|A|^{-2}}, 
\end{aligned}
\end{equation*}
and, by Lemma \ref{2dimhyp} (2), 
\begin{equation}\label{24043005}
|c_1-\sigma_{(p,q)}^{-1}\circ \tau_{k/n}\circ\sigma_{(p',q')}(c_1)|<\textnormal{Im}(c_1)\left(\frac{\textnormal{Im}(c_1)}{\textnormal{Im}(c_1)-C|A|^{-2}}-1\right)=O(|A|^{-2})
\end{equation} 
for sufficiently large $|p|+|q|$. 

By Lemma \ref{matrixfinite}, the number of possible choices for $\sigma_{(p,q)}^{-1}\circ \tau_{k/n}\circ\sigma_{(p',q')}$ is finite, so we assume $\begin{pmatrix}\alpha & \beta \\ \gamma &\delta\end{pmatrix}$ is fixed. Since
\begin{align*}
\begin{pmatrix}s & r \\q &p\end{pmatrix}\begin{pmatrix}\alpha & \beta \\ \gamma &\delta\end{pmatrix}=\begin{pmatrix}1 & k/n \\0 &1\end{pmatrix}\begin{pmatrix}s' & r' \\q' &p'\end{pmatrix},
\end{align*}
it follows that $(p',q')=(\delta p+\beta q,\gamma p+\alpha q).$ Let $(u_0,v_0)$ (resp. $(u_0',v_0')$) be the point of $\mathcal{X}_{ana}$ corresponding to the $(p,q)$ (resp. $(p',q')$)-Dehn filling. As in Section \ref{NZvf}, define $(\tilde{u}, \tilde{v}):=(\delta u+\gamma v,\beta u+\alpha v)$ where $\tilde{v}=\tilde{c_1}\tilde{u}+\tilde{c_3}\tilde{u}^3+\cdots$. Then 
$$\tilde{c_1}=\frac{\beta+\alpha c_1}{\delta+\gamma c_1}=c_1$$
by \eqref{24043005}, and the $(p',q')$-Dehn filling with respect to $\mathbf{m}, \mathbf{l}$ is the $(p,q)$-Dehn filling with respect to the new parametrization. Therefore, 
$$\Theta(p,q)=\Theta(p',q')=\tilde{\Theta}(p,q)$$
where $\Theta(p,q)=\textnormal{vol} \,M{(p,q)}-\textnormal{vol} \,M$ and $\tilde{\Theta}$ is as given in \eqref{Thetatilde}.

Since we can assume these equations hold for infinitely many coefficients $(p,q)$, by comparing the terms of $\Theta$ and $\tilde{\Theta}$, we have $c_j=\tilde{c_j}$ for $j\geq3$. Consequently, by rewriting the equations \eqref{Nzf1} and \eqref{Nzf2} with respect to the new parametrization, we get
\begin{align*}
f(u_0')&=\textnormal{vol}_{\mathbb{C}}\,M{(p',q')}-\text{vol}_{\mathbb{C}}\,M+\frac{\pi}{2}\ell(\boldsymbol{\gamma}(p',q'))&&\mod i\pi^2\mathbb{Z}\\
&=\frac{i}{4}\left[
\frac{1}{2}\tilde{c_3}\left(\frac{2\pi}{\tilde{A}}\right)^4+\frac{2}{3}\left(3\tilde{c_3}^2\frac{q}{\tilde{A}}-\tilde{c_5}\right)\left(\frac{2\pi}{\tilde{A}}\right)^6+\cdots\right]\\
=f(u_0)&=\textnormal{vol}_{\mathbb{C}}\,M{(p,q)}-\text{vol}_{\mathbb{C}}\,M+\frac{\pi}{2}\ell(\boldsymbol{\gamma}(p,q))&&\mod i\pi^2\mathbb{Z}
\end{align*}
where $\tilde{A}=p+\tilde{c_1}q=A$, implying
$$\ell(\boldsymbol{\gamma}(p,q))=\ell(\boldsymbol{\gamma}(p',q'))\mod \frac{2\pi i}{n}\mathbb{Z}.$$

The proof of the converse follows the same argument, as the complex length of the core geodesic and the complex volume share the dominant term up to a scaling factor.
\end{proof}

An immediate consequence of Theorem \ref{main2} is the following, which in turn implies Theorem \ref{25031903} as a corollary:
\begin{cor}\label{maincor}
Let $M$ be a 1-cusped hyperbolic 3-manifold and $\mathcal{X}_{ana}$ be its analytic holonomy set. Let $M(p',q')$ and $M(p,q)$ be two Dehn fillings of $M$ with sufficiently large $|p'|+|q'|$ and $|p|+|q|$. Then the following three statements are equivalent: 
\begin{enumerate}
\item $\textnormal{vol}_{\mathbb{C}}\,M(p',q')=\textnormal{vol}_{\mathbb{C}}\,M(p,q)$ modulo $\frac{i\pi^2}{n}\mathbb{Z}$; 
\item $\ell(\boldsymbol{\gamma}(p,q))=\ell(\boldsymbol{\gamma}(p',q'))$ modulo $\frac{2\pi i}{n}\mathbb{Z}$; 
\item there exists $\sigma:\mathcal{X}_{ana}\xrightarrow{\cong} \mathcal{X}_{ana}$ such that the Dehn filling point of $M(p',q')$ on $\mathcal{X}_{ana}$ corresponds under $\sigma$ to the Dehn filling point of $M(p,q)$ on $\mathcal{X}_{ana}$. 
\end{enumerate}
In particular, if the cusp shape of $M$ is contained in $\mathbb{Q}(\sqrt{-3})$ (resp. $\mathbb{Q}(\sqrt{-1})$), then $\sigma$ has order $6$ (resp. $4$) and $\begin{pmatrix} p' \\ q' \end{pmatrix}=(\sigma^T)^{-k}\begin{pmatrix} p \\ q\end{pmatrix}$ for some $k\in\mathbb{Z}$. Otherwise, $\sigma=\pm Id$. 
\end{cor}

\begin{proof}
The equivalence of (1) and (2) is exactly the statement of Theorem \ref{main2}. By Theorem 4.3 (and its proof) in \cite{J1}, (2) implies (3). 

To prove $(3)\Longrightarrow (2)$, let $\begin{pmatrix}
u_0\\
v_0
\end{pmatrix}$ (resp. $\begin{pmatrix}
u_0'\\
v_0'
\end{pmatrix}$) be the Dehn filling point of $M(p,q)$ (resp. $M(p', q')$) on $\mathcal{X}_{ana}$, and $\sigma:=\begin{pmatrix}
\alpha & \beta\\
\gamma & \delta 
\end{pmatrix}$. Then   
$$\begin{pmatrix}
u_0'\\
v_0'
\end{pmatrix}=
\begin{pmatrix}
\alpha & \beta\\
\gamma & \delta 
\end{pmatrix}
\begin{pmatrix}
u_0\\
v_0
\end{pmatrix}$$
by the assumption, and thus 
\begin{equation}\label{25042102}
\begin{pmatrix}
p' & q'\\
r' & s'
\end{pmatrix}
\begin{pmatrix}
u_0'\\
v_0'
\end{pmatrix}=
\begin{pmatrix}
2\pi i\\
-\ell(\boldsymbol{\gamma}(p',q'))
\end{pmatrix}
=\begin{pmatrix}
p' & q'\\
r' & s'
\end{pmatrix}
\begin{pmatrix}
\alpha & \beta\\
\gamma & \delta 
\end{pmatrix}
\begin{pmatrix}
u_0\\
v_0
\end{pmatrix}
\end{equation}
where $r', s'\in \mathbb{Z}$ and $p's'-q'r'=1$ by Lemma \ref{lemNZ}. Since $pu_0+qv_0=2\pi i$, and $e^{u_0}$ and $e^{v_0}$ are not roots of unity, it follows that 
$$\begin{pmatrix}
p' & q'
\end{pmatrix}
\begin{pmatrix}
\alpha & \beta\\
\gamma & \delta 
\end{pmatrix}=\begin{pmatrix}
p & q
\end{pmatrix},$$ and so, for $r, s\in \mathbb{Z}$ satisfying $ps-qr=1$, 
\begin{equation}\label{25042101}
\begin{pmatrix}
r & s
\end{pmatrix}-\begin{pmatrix}
r' & s'
\end{pmatrix}
\begin{pmatrix}
\alpha & \beta\\
\gamma & \delta 
\end{pmatrix}=m\begin{pmatrix}
p & q
\end{pmatrix}
\end{equation} 
for some $m\in \mathbb{Q}$. Multiplying $\begin{pmatrix}
u_0 \\
v_0
\end{pmatrix}$ to the left-hand sides of \eqref{25042101} and comparing the result with \eqref{25042102}, we conclude 
$$\ell(\boldsymbol{\gamma}(p,q))-\ell(\boldsymbol{\gamma}(p',q'))\in \frac{2\pi i}{n} \mathbb{Z}$$
where $n\in \mathbb{N}$. 

The last statement is also a result of Theorem 4.3 in \cite{J1}. (However, since both the $+$ and $-$ signs are counted as a single case in \cite{J1}, the order that appears there is half of the order considered here.)  
\end{proof}

If $n=1$, then the element $\sigma$ in the third statement of the above corollary clearly lies in $\textnormal{SL}_2(\mathbb{Z})$.

\noindent \textbf{Remark. } Theorem 4.3 in \cite{J1} is a special case of the so-called Zilber-Pink conjecture. Its proof is phrased in slightly different language, involving what is known as an anomalous subset. However, it is essentially equivalent to the version adopted in this paper. Specifically, for $\sigma \in \mathrm{GL}_2(\mathbb{Q})$, consider a subset of $\mathcal{X}_{ana} \times \mathcal{X}_{ana} \subset \mathbb{C}^4 (:= (u, v, u', v'))$ defined by
$$\begin{pmatrix}
u'\\
v'
\end{pmatrix}=\sigma \begin{pmatrix}
u\\
v
\end{pmatrix}.$$
Then the subset is \textit{anomalous} in $\mathcal{X}_{ana}\times \mathcal{X}_{ana}$ if and only if the map
\begin{equation*}
\begin{aligned}
\mathcal{X}_{ana}&\longrightarrow \mathcal{X}_{ana}\\
\begin{pmatrix} u \\ v \end{pmatrix}&\mapsto \sigma\begin{pmatrix} u \\ v 
\end{pmatrix}
\end{aligned}
\end{equation*}
is an isomorphism. 

Conceivably, the statement of Theorem \ref{25031903} (or its analogue) has been known to other researchers, including W. Neumann, since at least the 2000s. More specifically, given two manifolds (not necessarily the same), if they admit infinitely many pairs of Dehn fillings - one from each - with the same volume, it has been believed that their holonomy varieties would coincide.

\subsection{Proof of Theorem \ref{25041802}}
In this subsection, we establish Theorem \ref{25041802}. The general scheme is similar to that presented in the previous subsection. We begin with the following, which can be seen as a counterpart of Theorem \ref{main2}. 

\begin{thm}\label{main3}
Let $M$ be a 1-cusped hyperbolic 3-manifold. Given $n\in\mathbb{N}$,
\begin{align}\label{CVconjurational}
\textnormal{vol}_{\mathbb{C}}\,M-\textnormal{vol}_{\mathbb{C}}\,M{(p,q)}=\overline{\textnormal{vol}_{\mathbb{C}}\,M-\textnormal{vol}_{\mathbb{C}}\,M{(p',q')}}\quad\mod\frac{i\pi^2}{n}\mathbb{Z}
\end{align}
is equivalent to
$$\ell(\boldsymbol{\gamma}(p,q))=\overline{\ell(\boldsymbol{\gamma}(p',q'))}\quad  \mod\frac{2\pi i}{n}\mathbb{Z}$$
for sufficiently large $|p|+|q|$.
\end{thm}

\begin{proof}
As in the previous notation, \eqref{CVconjurational} is equivalent to
\begin{align*}
\left|\frac{B}{A}+\frac{\overline{B'}}{\overline{A'}}+\frac{k}{n}\right|=O(|A|^{-4}).
\end{align*}

Define $\zeta(z)=-\overline{z}$. Then $|\sigma_{(p,q)}(c_1)-\zeta\circ\tau_{-k/n}\circ\sigma_{(p',q')}(c_1)|=O(|A|^{-4})$
and so 
$$|c_1-\sigma_{(p,q)}^{-1}\circ \zeta\circ\tau_{-k/n}\circ\sigma_{(p',q')}(c_1)|=O(|A|^{-2}),$$ 
following a similar procedure shown in the proof of Theorem \ref{main2}.

By letting $\sigma_{(-p,q)}(z):=\frac{-sz+r}{qz-p}$, we have $\sigma_{(p,q)}\circ\zeta=\zeta\circ\sigma_{(-p,q)}$, so 
$$|c_1-\zeta\circ\sigma_{(-p,q)}^{-1}\circ \tau_{-k/n}\circ\sigma_{(p',q')}(c_1)|=O(|A|^{-2}),$$ implying
\begin{align*}
\textnormal{Im}(c_1-\sigma_{(-p,q)}^{-1}\circ \tau_{-k/n}\circ\sigma_{(p',q')}(c_1))&=O(|A|^{-2}),\\
\textnormal{Re}(\sigma_{(-p,q)}^{-1}\circ \tau_{-k/n}\circ\sigma_{(p',q')}(c_1))&=-\textnormal{Re}(c_1)+O(|A|^{-2}).
\end{align*}

By Lemma \ref{matrixfinite}, the number of possible choices for $\sigma_{(-p,q)}^{-1}\circ \tau_{-k/n}\circ\sigma_{(p',q')}$ is finite, thus we assume $\begin{pmatrix}\alpha & \beta \\ \gamma &\delta\end{pmatrix}$ is fixed. From
\begin{align*}
\begin{pmatrix}-s & r \\q &-p\end{pmatrix}\begin{pmatrix}\alpha & \beta \\ \gamma &\delta\end{pmatrix}=\begin{pmatrix}1 & -k/n \\0 &1\end{pmatrix}\begin{pmatrix}s' & r' \\q' &p'\end{pmatrix},
\end{align*}
it follows that $(p',q')=(-\delta p+\beta q,-\gamma p+\alpha q).$ As given in the proof of Theorem \ref{main2}, if we let $(\tilde{u}, \tilde{v}):=(\delta u+\gamma v,\beta u+\alpha v)$ where $\tilde{v}=\tilde{c_1}\tilde{u}+\tilde{c_3}\tilde{u}^3+\cdots$, then
$$\tilde{c_1}=\frac{\beta+\alpha c_1}{\delta+\gamma c_1}=-\overline{c_1},$$
and the $(p',q')$-Dehn filling with respect to $\mathbf{m}, \mathbf{l}$ corresponds to the $(-p,q)$-Dehn filling with respect to the new parametrization. Therefore, 
\begin{align*}
\Theta(p',q')=\tilde{\Theta}(-p,q)=-\text{Im}(\tilde{c_1})\frac{\pi^2}{|\tilde{A}|^2}+\frac{1}{4}\text{Im}\Biggl[\frac{1}{2}\tilde{c_3}\left(\frac{2\pi}{\tilde{A}}\right)^4+\cdots\Biggr]
\end{align*}
where $\tilde{A}=-p+\tilde{c_1}q=-\overline{A}$. Comparing this with
\begin{align*}
\Theta(p,q)=-\text{Im}(-\overline{c_1})\frac{\pi^2}{|A|^2}-\frac{1}{4}\text{Im}\Biggl[\overline{\frac{1}{2}c_3\left(\frac{2\pi}{A}\right)^4}+\cdots\Biggr],
\end{align*}
it follows that $\tilde{c_j}=-\overline{c_j}$ for $j\geq3$. As a result, 
\begin{align*}
f(u_0')&=\textnormal{vol}_{\mathbb{C}}\,M{(p',q')}-\text{vol}_{\mathbb{C}}\,M+\frac{\pi}{2}\ell(\boldsymbol{\gamma}(p',q'))&&\mod i\pi^2\mathbb{Z}\\
&=\frac{i}{4}\left[
\frac{1}{2}\tilde{c_3}\left(\frac{2\pi}{\tilde{A}}\right)^4+\frac{2}{3}\left(3\tilde{c_3}^2\frac{q}{\tilde{A}}-\tilde{c_5}\right)\left(\frac{2\pi}{\tilde{A}}\right)^6+\cdots\right]\\
=\overline{f(u_0)}&=\overline{\textnormal{vol}_{\mathbb{C}}\,M{(p,q)}-\text{vol}_{\mathbb{C}}\,M+\frac{\pi}{2}\ell(\boldsymbol{\gamma}(p,q))}&&\mod i\pi^2\mathbb{Z}, 
\end{align*}
which concludes $\ell(\boldsymbol{\gamma}(p,q))=\overline{\ell(\boldsymbol{\gamma}(p',q'))}$ modulo $\frac{2\pi i}{n}\mathbb{Z}$.

The proof of the converse follows the same argument, as in the proof of Theorem \ref{main2}.
\end{proof}

As a corollary, we attain the following, which implies our second main result, Theorem \ref{25041802}:

\begin{cor}\label{conjucor}
Let $M$ be a 1-cusped hyperbolic 3-manifold and $\mathcal{X}_{ana}$ be its analytic holonomy set. Let $M(p',q')$ and $M(p,q)$ be two Dehn fillings of $M$ with sufficiently large $|p'|+|q'|$ and $|p|+|q|$. Then the following three statements are equivalent: 
\begin{enumerate}
\item $\textnormal{vol}_{\mathbb{C}}\,M-\textnormal{vol}_{\mathbb{C}}\,M(p,q)=\overline{\textnormal{vol}_{\mathbb{C}}\,M-\textnormal{vol}_{\mathbb{C}}\,M{(p',q')}}$ modulo $\frac{i\pi^2}{n}\mathbb{Z}$; 
\item $\ell(\boldsymbol{\gamma}(p,q))=\overline{\ell(\boldsymbol{\gamma}(p',q'))}$ modulo $\frac{2\pi i}{n}\mathbb{Z}$; 
\item there exists $\sigma:\mathcal{X}_{ana}\xrightarrow{\cong} \overline{\mathcal{X}}_{ana}$ such that the Dehn filling point of $M(p',q')$ on $\mathcal{X}_{ana}$ corresponds under $\sigma$ to the Dehn filling point of $M(p,q)$ on $\overline{\mathcal{X}}_{ana}$. 
\end{enumerate}
\end{cor}

The proof of the corollary follows by mimicking that of Corollary \ref{maincor}, so we skip it here. 

\section{Experiments}\label{exp}

We refer to the census manifolds using the nomenclature adopted by \texttt{SnapPy}. The first letter denotes the number of tetrahedra: m, s, v, t, and o9 for $\leq$5, 6, 7, 8, and 9, respectively. The number following the letter indicates the order of volume increase.

We ran a \texttt{Sage} program for the experiments, and the code is publicly available on \texttt{GitHub} \cite{JS}. Our implementation was partially inspired by the program of Futer, Purcell, and Schleimer \cite{FPS}, which is designed to detect cosmetic surgeries on knots and 1-cusped census manifolds.

For a given manifold $M$, we compute $\textnormal{vol}\,M(p,q)$ for $0\leq|p|,|q|\leq100$. We then compare all pairs of Dehn fillings to determine whether they have the same volume. Here, “same” means that the difference between two values is less than or equal to $10^{-62}$, which is very close to \texttt{SnapPy}’s high-precision setting.

If there are fewer than five pairs of Dehn fillings with the same volume, the program outputs them as the exceptional matchings, which are also available along with the code. (The process takes approximately 2 to 6 seconds of wall time per manifold.) Otherwise, we assume the manifold possesses volume symmetry and find the explicit formulas by hand.

As shown in Table \ref{disNM}, there are 214 manifolds with $\tilde{N}^{100}_M>1$, all of which exhibit symmetries in their volume formulae. The results are presented in Table \ref{TableExp}. The matrix $\begin{pmatrix} \alpha & \beta\\ \gamma & \delta \end{pmatrix}$ in the first column implies 
\begin{equation}\label{25042803}
\textnormal{vol}\,M(p,q)=\textnormal{vol}\,M(\alpha p+\beta q,\gamma p+\delta q).
\end{equation}
In the third column, the cusp shapes of the manifolds are listed. If an equation is present, it indicates that the manifolds in that row may have different cusp shapes but all satisfy the given equation. In Proposition \ref{25033001} below, we elaborate more on this equation in light of Conjecture \ref{25030605}.

\begin{longtable}{l|l|F{0.12\textwidth}|F{0.38\textwidth}}
Symmetry of vol$\,M$ & $\tilde{N}^{100}_M$ & $c_1$ & Manifolds \\ \hhline{====}
$\left\langle \begin{pmatrix} 0 & -1\\ 1 & 1 \end{pmatrix}, \begin{pmatrix} 0 & 1\\ 1 & 0 \end{pmatrix} \right\rangle$ & 6 & $\omega$ & m208 \\ \hline
$\left\langle \frac{1}{2}\begin{pmatrix} -1 & -3\\ -1 & 1 \end{pmatrix}, \begin{pmatrix} 1 & 0\\ 0 & -1 \end{pmatrix} \right\rangle$ & 4 & $\sqrt{-3}$ & s595, o9\_37014, o9\_37015, o9\_40998, o9\_41002 \\ \hline
$\left\langle \begin{pmatrix} 0 & -1\\ 1 & 0 \end{pmatrix}, \begin{pmatrix} 1 & 0\\ 0 & -1 \end{pmatrix} \right\rangle$ & 4 & $i$ & m135, v1859, t07829, t12033, t12035, t12036, t12041, t12043, t12045, t12050, o9\_21441, o9\_22828, o9\_31519, o9\_31521 \\ \hline
$\left\langle \frac{1}{2}\begin{pmatrix} 0 & 4\\ 1 & 0 \end{pmatrix}, \begin{pmatrix} 1 & 0\\ 0 & -1 \end{pmatrix} \right\rangle$ & 4 & $2i$ & m136, m140, v2274, v3066, t07828, t08606, t10079, t10238, t12034, o9\_21999, o9\_23923, o9\_25468, o9\_26139, o9\_26650, o9\_27047, o9\_28317, o9\_28436 \\ \hline
$\left\langle \frac{1}{4}\begin{pmatrix} -2 & 3\\ 4 & 2 \end{pmatrix}, \begin{pmatrix} 1 & 1\\ 0 & -1 \end{pmatrix} \right\rangle$ & 4 & $\frac{1}{2}+i$ & t12051, o9\_31522 \\ \hline
$\left\langle \frac{1}{6}\begin{pmatrix} 0 & 9\\ 4 & 0 \end{pmatrix}, \begin{pmatrix} 1 & 0\\ 0 & -1 \end{pmatrix} \right\rangle$ & 4 & $\frac{3}{2}i$ & t12058 \\ \hline
$\left\langle \frac{1}{3}\begin{pmatrix} 3 & 2\\ 0 & -3 \end{pmatrix}, \frac{1}{4}\begin{pmatrix} -1 & 5\\ 3 & 1 \end{pmatrix} \right\rangle$ & 4 & $\frac{1+4i}{3}$ & t12059 \\ \hline
$\left\langle \frac{1}{12}\begin{pmatrix} -2 & -37\\ 4 & 2 \end{pmatrix}, \begin{pmatrix} 1 & 1\\ 0 & -1 \end{pmatrix} \right\rangle$ & 4 & $\frac{1}{2}+3i$ & t12062 \\ \hline
$\left\langle \frac{1}{3}\begin{pmatrix} 0 & 9\\ 1 & 0 \end{pmatrix}, \begin{pmatrix} 1 & 0\\ 0 & -1 \end{pmatrix} \right\rangle$ & 4 & $3i$ & t12063 \\ \hline
$\begin{pmatrix} 0 & -1\\ 1 & 0 \end{pmatrix}$ & 2 & $i$ & m130, m139, v3318, t12038, t12040, o9\_17193, o9\_19556, o9\_35959, o9\_41335, o9\_42724 \\ \hline

$\begin{pmatrix} 1 & 0\\ 0 & -1 \end{pmatrix}$ & 2 & Re$(c_1)$ $=0$ & m004, m009, m206, s118, s119, s464, s726, s882, s892, s912, s915, s956, s961, v2037, v2347, v2787, v2789, v2875, v3103, t03910, t05928, t06828, t07734, t08108, t09253, t10425, t10805, t12071, t12545, t12548, t12587, t12638, t12640, t12648, t12766, t12771, t12819, t12839, o9\_17670, o9\_34677, o9\_35326, o9\_35663, o9\_37643, o9\_39892, o9\_39893, o9\_40736, o9\_41001, o9\_41009, o9\_41933, o9\_43800, o9\_44011, o9\_43569, o9\_44157 \\ \hline

$\begin{pmatrix} 1 & 1\\ 0 & -1 \end{pmatrix}$ & 2 & Re$(c_1)$ $=\frac{1}{2}$ & m003, s955, s957, s960, s772, s775, s778, s779, s787, s911, t12061, t12068, t12069, t12072, t12546, t12547, t12649, t12838, o9\_31514, o9\_31515, o9\_31516, o9\_31517, o9\_36482, o9\_42689, o9\_44013
 \\ \hline

$\begin{pmatrix} 0 & 1\\ 1 & 0 \end{pmatrix}$ & 2 & $|c_1|^2$ $=1$ & m207, s463, s594, s725, s881, s891, v3102, t03911, t05927, t07733, t08107, t09252, t10426, t12639, t12641, o9\_17508, o9\_35327, o9\_40737, o9\_41000, o9\_41004, o9\_41006, o9\_41007, o9\_41008, o9\_43799 \\ \hline

$\frac{1}{2}\begin{pmatrix} 2 & 1\\ 0 & -2 \end{pmatrix}$ & 2 & Re$(c_1)$ $=\frac{1}{4}$ & m137, t10803, o9\_43803, o9\_44093 \\ \hline

$\frac{1}{2}\begin{pmatrix} 2 & -1\\ 0 & -2 \end{pmatrix}$ & 2 & Re$(c_1)$ $=-\frac{1}{4}$ & m410, s784 \\ \hline

$\frac{1}{2}\begin{pmatrix} -2 & 0\\ -1 & 2 \end{pmatrix}$ & 2 & $\scalemath{0.37}{\frac{5-\sqrt{41}+\sqrt{146-26\sqrt{41}}}{4}}$ & o9\_40712, o9\_40715, o9\_40731 \\ \hline

$\frac{1}{2}\begin{pmatrix} -2 & 0\\ 1 & 2 \end{pmatrix}$ & 2 & 4Re$(c_1)$ $=|c_1|^2$ & m010, t12833, o9\_40714, o9\_40730, o9\_40732 \\ \hline

$\frac{1}{2}\begin{pmatrix} -1 & -3\\ -1 & 1 \end{pmatrix}$ & 2 & 2Re$(c_1)+$ $|c_1|^2=3$ & s789, v1540, t04743, t08751, o9\_36481, o9\_43797, o9\_43798, o9\_43808 \\ \hline

$\frac{1}{2}\begin{pmatrix} -1 & 3\\ 1 & 1 \end{pmatrix}$ & 2 & $-$2Re$(c_1)$ $+|c_1|^2$ $=3$ & s788, s916, v1539, v2345, v2346, v3469, v3470, t08752, t09443, t09444, o9\_20566 \\ \hline

$\frac{1}{4}\begin{pmatrix} -3 & -7\\ -1 & 3 \end{pmatrix}$ & 2 & $\sqrt{-7}$ & s777 \\ \hline

$\frac{1}{4}\begin{pmatrix} -3 & 7\\ 1 & 3 \end{pmatrix}$ & 2 & $\sqrt{-7}$ & o9\_39902 \\ \hline

$\frac{1}{4}\begin{pmatrix} -1 & 5\\ 3 & 1 \end{pmatrix}$ & 2 & $\frac{1+\sqrt{-7}}{2}$ & s773, s786 \\ \hline

$\frac{1}{2}\begin{pmatrix} 0 & 4\\ 1 & 0 \end{pmatrix}$ & 2 & $|c_1|^2=4$ & m220, m221, t12835, o9\_17671, o9\_20722, o9\_36681, o9\_36682, o9\_36683, o9\_36684, o9\_39895, o9\_39896 \\ \hline

$\frac{1}{2}\begin{pmatrix} 0 & 4\\ -1 & 0 \end{pmatrix}$ & 2 & $2i$ & v1858, o9\_19555, o9\_31518, o9\_31520 \\ \hline

$\frac{1}{4}\begin{pmatrix} 2 & -4\\ -3 & -2 \end{pmatrix}$ & 2 & $\frac{1+\sqrt{-7}}{2}$ & s781, o9\_39899, o9\_39901 \\ \hline

$\frac{1}{3}\begin{pmatrix} 3 & 0\\ -2 & -3 \end{pmatrix}$ & 2 & $\frac{3+3\sqrt{-7}}{8}$ & o9\_39894 \\ \hline

$\frac{1}{8}\begin{pmatrix} -1 & 13\\ -5 & 1 \end{pmatrix}$ & 2 & $\frac{-1+8i}{5}$ & t12031 \\ \hline

$\frac{1}{6}\begin{pmatrix} 4 & -4\\ -5 & -4 \end{pmatrix}$ & 2 & $\frac{1+3\sqrt{-7}}{8}$ & o9\_39897 \\ \hline

$\frac{1}{6}\begin{pmatrix} -4 & -5\\ -4 & 4 \end{pmatrix}$ & 2 & $\frac{1+3\sqrt{-7}}{8}$ & o9\_39898 \\ \hline
\caption{Classification of census manifolds with $\tilde{N}^{100}_M>1$.} \label{TableExp}
\end{longtable}

Note that the symmetry group in the second row has order 6. Although we have
\begin{align*}
\textnormal{vol}\,M(p,q)&=\textnormal{vol}\,M\left(\frac{-p-3q}{2},\frac{-p+q}{2}\right)=\textnormal{vol}\,M\left(\frac{-p+3q}{2},\frac{p+q}{2}\right)\\
&=\textnormal{vol}\,M(-p,q)=\textnormal{vol}\,M\left(\frac{p+3q}{2},\frac{-p+q}{2}\right)=\textnormal{vol}\,M\left(\frac{p-3q}{2},\frac{p+q}{2}\right)
\end{align*}
for those five manifolds, it is impossible that those six pairs become coprime integer pairs simultaneously. Therefore, we classify them as $\tilde{N}^{100}_M=4$.

As verified in Theorem \ref{25031903}, if $N^{\mathbb{C}}_M(v)>1$ for some $v\in \mathbb{C}$ sufficiently close to $\textnormal{vol}_{\mathbb{C}}\,M$ (modulo $i\pi^2\mathbb{Z}$), then the cusp shape of $M$ must be quadratic; more precisely, contained in either $\mathbb{Q}(\sqrt{-3})$ or $\mathbb{Q}(\sqrt{-1})$. However, in the real volume case, the same property does not hold. In fact, Table \ref{TableExp} shows that there exist $v\in \mathbb{R}$ (sufficiently close to $\text{vol}\,M$) and $M$ with non-quadratic cusp shape such that $N_M(v)>1$. Moreover, it appears that such manifolds $M$ can always be found with cusp shapes of arbitrarily large algebraic degree. 

In the proposition below, we nonetheless show that the cusp shape of $M$ in this case satisfies a strong algebraic relation, which arises from an isomorphism between $\mathcal{X}_{ana}$ and $\overline{\mathcal{X}}_{ana}$.

\begin{prop}\label{25033001}
Let $M$ be a $1$-cusped hyperbolic $3$-manifold whose cusp shape $c_1$ is neither contained in $\mathbb{Q}(\sqrt{-3})$ nor $\mathbb{Q}(\sqrt{-1})$. Assuming Conjecture \ref{25030605}, if there exist $(p,q)$ and $\sigma:=\begin{pmatrix}
\alpha & \beta\\ \gamma & \delta \end{pmatrix}\in \textnormal{GL}_2(\mathbb{Q})$ satisfying \eqref{25042803}, then  
\begin{equation}\label{25031905}
\sigma^2=Id\quad \text{and}\quad \alpha\overline{c}_1+\gamma |c_1|^2=\beta+\delta c_1.
\end{equation}
\end{prop}
\begin{proof}
By the assumption on the cusp shape and applying Theorem \ref{25041802}, $(\sigma^T)^{-1}$ induces an isomorphism from $\mathcal{X}_{ana}$ to $\overline{\mathcal{X}}_{ana}$. Note that, due to symmetry, $(\sigma^T)^{-1}$ also induces an isomorphism from $\overline{\mathcal{X}}_{ana}$ to $\mathcal{X}_{ana}$. Therefore $(\sigma^T)^{-2}\in \textnormal{Aut}\,\mathcal{X}_{ana}$ and hence it must be the identity by Theorem \ref{25031903}.  

To be more explicit, let $\mathcal{X}_{ana}$ and $\overline{\mathcal{X}}_{ana}$ be defined as 
\begin{equation*}
v=c_1u+c_ku^k+\cdots\quad \text{and}\quad \overline{v}=\overline{c}_1\overline{u}+\overline{c}_k\overline{u}^k+\cdots 
\end{equation*}
respectively where $c_k\neq 0$. By the assumption that 
\begin{equation*}
\begin{aligned}
\mathcal{X}_{ana} &\longrightarrow \overline{\mathcal{X}}_{ana}\\
\begin{pmatrix} u \\ v \end{pmatrix}&\mapsto \sigma^T\begin{pmatrix} u \\ v \end{pmatrix}
\end{aligned}
\end{equation*}
is an isomorphism, substituting $\overline{u} = \alpha u + \gamma v$ into the expression for $\overline{v}$ yields the following equality: 
\begin{equation}\label{25050301}
\begin{aligned} 
\overline{v}= \beta u + \delta v \Longrightarrow  \overline{c}_1(\alpha u+\gamma v)+\overline{c}_k(\alpha u+\gamma v)^k+\cdots = \beta u + \delta(c_1 u+c_k u^k+\cdots). 
\end{aligned}
\end{equation} 
Comparing the coefficients of $u$ on both sides, the last equality in \eqref{25031905} are obtained.
\end{proof}

For instance, if $M=\textnormal{o9\_39898}$, the last example in Table \ref{TableExp}, then $c_1=\frac{1+3\sqrt{-7}}{8}, \sigma=\frac{1}{6}\begin{pmatrix} -4 & -5\\ -4 & 4\end{pmatrix}$, and so
$$ \alpha\overline{c}_1+\gamma |c_1|^2=\beta+\delta c_1\Longrightarrow -\frac{4}{6}\frac{1-3\sqrt{-7}}{8}-\frac{4}{6}=-\frac{5}{6}+\frac{4}{6}\frac{1-3\sqrt{-7}}{8}.$$

Indeed, the cusp shape of every example given in Table \ref{TableExp} satisfies this relation, and the equations in the third column, obtained by combining the data from the first column, precisely represent it. 

Note that, comparing the coefficients of $u^k$ on both sides of \eqref{25050301}, we have
$$\overline{c}_k(\alpha+\gamma c_1)^k=(\delta-\gamma\overline{c}_1)c_k.$$
The equality, of course, is also expected to hold for every example in Table \ref{TableExp}.

\section{O-minimality and Transcendentality}\label{SecTr}

In this section and next, we prove our last main result, Theorem \ref{slow}. 

The following is the \textit{Pila–Wilkie counting theorem}, which has been mentioned a couple of times beforehand:

\begin{thm}[\cite{BD}, Theorem 1.1]\label{PWcount}
Let $\mathcal{X}\subseteq \mathbb{R}^n$ with $n\geq1$ be definable in an o-minimal structure on $\mathbb{R}$. Then for all $\varepsilon>0$, there is a constant $C$ such that for all $T$, $$N(\mathcal{X}^{tr},T)\leq CT^\varepsilon.$$
\end{thm}

The \textit{height} of $\frac{a}{b}\in \mathbb{Q}$ is given by $H\left(\frac{a}{b}\right):=\max\{|a|,|b|\}$ where $a$ and $b$ are coprime integers; similarly, for $a=(a_1,\cdots, a_n)\in\mathbb{Q}^n$, $H(a):=\max\{H(a_1), \cdots, H(a_n)\}$. We set $$\mathcal{X}(\mathbb{Q},T):=\{a\in X\cap\mathbb{Q}^n : H(a)\leq T\}$$ and $N(\mathcal{X},T):=\#\mathcal{X}(\mathbb{Q},T)$.

The \textit{algebraic part} $\mathcal{X}^{alg}$ of $\mathcal{X}$ is the union of its connected infinite semialgebraic subsets, that is, sets described by polynomial equations and inequalities (a precise definition will be given later). And the \textit{transcendental part} $\mathcal{X}^{tr}$ of $\mathcal{X}$ is $\mathcal{X}\setminus \mathcal{X}^{alg}$. Heuristically, the above theorem says that rational points are predominantly concentrated in the algebraic part of $\mathcal{X}$, with only a sparse distribution in $\mathcal{X}^{tr}$.

\subsection{O-minimality}

In this subsection, we introduce the theory of o-minimality, where the letter ``o'' stands for “order.” An \textit{ordered field} is a field equipped with a total order that is compatible with its field operations. A standard example is the field of real numbers with the usual order, $(\mathbb{R},\geq)$. Roughly speaking, an o-minimal structure is a well-behaved, axiomatized (model-theoretic) structure over an ordered field that is, in some sense, ``minimal" with respect to its order. We omit the full model-theoretic definition and instead describe it as a collection of sets satisfying certain axioms. The formulation below follows Appendix of \cite{BD} and Section 2 of \cite{J2}.

A \textit{semialgebraic} subset of $\mathbb{R}^n$ is defined to be a finite union of sets of the form $$\{a\in\mathbb{R}^n : g(a)=0, h_1(a)>0, \cdots, h_m(a)>0\}$$ where $g, h_1, \cdots, h_m\in\mathbb{R}[x_1, \cdots, x_n]$.

\begin{dfn}
An \textit{o-minimal structure} on $\mathbb{R}$ is a sequence $\mathcal{S}=(\mathcal{S}_n)$ such that for all $n\geq1$,
\begin{enumerate}
    \item $\mathcal{S}_n$ is a boolean algebra of subsets of $\mathbb{R}^n$.
    \item $\mathcal{S}_n$ contains every semialgebraic subsets of $\mathbb{R}^n$, and $\mathcal{S}_1$ contains no other subsets of $\mathbb{R}$.
    \item If $\mathcal{X}\in\mathcal{S}_n$ and $\mathcal{Y}\in\mathcal{S}_m$, then $\mathcal{X}\times \mathcal{Y}\in\mathcal{S}_{n+m}$.
    \item If $\mathcal{X}\in\mathcal{S}_{n+1}$, then $\pi(\mathcal{X})\in\mathcal{S}_{n}$, where $\pi:\mathbb{R}^{n+1}\rightarrow\mathbb{R}^n$ is the projection onto the first $n$ coordinates.
\end{enumerate}
If $\mathcal{X}\in\mathcal{S}_n$ for some $n$, then we say that $\mathcal{X}$ is \textit{definable} in $\mathcal{S}$.
\end{dfn}

For instance, $\mathcal{S}_1$, the semialgebraic subsets of $\mathbb{R}$, is the collection of finite unions of points and intervals.

One of the nice properties of definable sets that we will employ in the proofs is the following:

\begin{prop}[\cite{BD}, Proposition A.4]\label{finiteness}
Let $\mathcal{Z}\subset\mathbb{R}^m\times\mathbb{R}^n$ be definable in an o-minimal structure, and for each $a\in\mathbb{R}^m$, put
\begin{align*}
\mathcal{Z}_a:=\{b\in\mathbb{R}^n : (a,b)\in \mathcal{Z}\}.
\end{align*}
Then there exist a number $P=P(\mathcal{Z})\in\mathbb{N}$ such that for each $a\in\mathbb{R}^m$ the fiber $\mathcal{Z}_a$ has at most $M$ connected components.
\end{prop}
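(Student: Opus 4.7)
The plan is to deduce this finiteness statement directly from the \emph{cell decomposition theorem}, the main structure theorem of o-minimality. Recall that cells in $\mathbb{R}^k$ are defined inductively: the cells in $\mathbb{R}$ are points and open intervals, while a cell in $\mathbb{R}^{k+1}$ is either the graph of a continuous definable function $f\colon C \to \mathbb{R}$ on a cell $C \subset \mathbb{R}^k$, or a ``band'' $\{(x,y) : x \in C,\; f(x) < y < g(x)\}$ over such a $C$ with continuous definable $f < g$ (allowing $f = -\infty$ or $g = +\infty$). Every cell is definable and \emph{connected}, and in fact homeomorphic to an open box.

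Using cell decomposition in its cylindrical form -- that is, respecting the projection $\pi\colon \mathbb{R}^{m+n} \to \mathbb{R}^m$ onto the first $m$ coordinates -- I would partition $\mathbb{R}^{m+n}$ into finitely many cells $C_1, \ldots, C_N$ such that $S$ is exactly a union of a subcollection, say $S = \bigsqcup_{i \in I} C_i$. Set $M := |I|$; this is finite and depends only on $S$.

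Now fix an arbitrary $a \in \mathbb{R}^m$. Because the decomposition is cylindrical with respect to the first $m$ coordinates, each fiber $(C_i)_a := \{b \in \mathbb{R}^n : (a,b) \in C_i\}$ is either empty or itself a cell in $\mathbb{R}^n$: this is immediate from the inductive definition, since fixing the first $m$ coordinates of a graph cell yields a point or a lower-dimensional graph cell in $\mathbb{R}^n$, and fixing them in a band cell yields a band cell in $\mathbb{R}^n$. In particular, each nonempty fiber $(C_i)_a$ is connected, and hence $S_a = \bigsqcup_{i \in I} (C_i)_a$ is a disjoint union of at most $M$ connected sets. Therefore $S_a$ has at most $M$ connected components, with the bound $M$ independent of $a$.

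The real work is all packaged inside the cell decomposition theorem, whose standard proof proceeds by a delicate simultaneous induction on dimension, together with the one-variable monotonicity theorem (any definable $f\colon (a,b) \to \mathbb{R}$ is, after a finite partition, continuous and strictly monotonic or constant on each piece). This induction is the main obstacle: it must produce a \emph{single} decomposition that is simultaneously cell and projection-compatible. Once that machinery is granted, uniformity in $a$ is essentially automatic, because one finite cell decomposition serves every parameter value at once, and the finiteness bound on connected components of fibers drops out as an immediate consequence.
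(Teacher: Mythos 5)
Your proof is correct and is the standard argument: the paper itself offers no proof, simply citing Bhardwaj--van den Dries, and their Proposition A.4 is established in exactly this way, via the cell decomposition theorem and the observation that fibers of cells over a fixed parameter are again cells (hence connected). The only place where you wave your hand slightly is the claim that fixing the first $m$ coordinates of a cell in $\mathbb{R}^{m+n}$ yields a cell in $\mathbb{R}^n$; strictly speaking this requires a short induction on $n$ (peeling off one coordinate at a time from the top, since cells are built one variable at a time), but this is routine and your argument is sound.
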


In the proof of Theorem \ref{PWcount}, the mean value theorem is repeatedly used to bound the determinant of a certain matrix. For a detailed explanation, see the description below Theorem \ref{hype} in Section \ref{sub:key_idea}. A crucial step in this method involves parametrizing the given set with control over the derivatives, ensuring they remain uniformly bounded. This motivates the definition of a  \textit{strong $k$-parametrization} of a set $\mathcal{X}\subseteq\mathbb{R}^n$: a $C^k$-map $f:(0,1)^m\rightarrow\mathbb{R}^n$ such that $f((0,1)^m)=\mathcal{X}$ and $|f^{(\alpha)}|\leq1$ for all $\alpha=(\alpha_1, \cdots, \alpha_n)\in\mathbb{N}^m$ with $\alpha_1+ \cdots+\alpha_n\leq k$.

Another useful property of definable sets that we will adopt is 

\begin{thm}[\cite{BD}, Theorem 1.3]\label{param}
Let $\mathcal{Z}\subset\mathbb{R}^m\times\mathbb{R}^n$ with $n\geq1$ be definable in an o-minimal structure. If $\mathcal{Z}_a$ is a subset of $[-1,1]^n$ with empty interior for all $a\in\mathbb{R}^m$, then there exists a number $P=P(\mathcal{Z})\in\mathbb{N}$ such that for each $a\in\mathbb{R}^m$ the fiber $\mathcal{Z}_a$ is the union of at most $P$ subsets, each having a strong k-parametrization.
\end{thm}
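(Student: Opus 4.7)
The plan is to derive this parametrization theorem from the cell decomposition theorem of o-minimal structures together with an iterative derivative-taming procedure. First, I would invoke the cell decomposition theorem to partition $S$ into finitely many definable cells compatible with the projection $S \to \mathbb{R}^m$. Since definability is preserved under fiberwise operations, each fiber $S_a$ becomes a finite union of cells whose combinatorial type is uniformly bounded across $a$; the total number of such cells per fiber is uniformly bounded by Proposition \ref{finiteness} applied to a suitable incidence set encoding the cell decomposition.

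Second, I would reduce to the problem of strongly parametrizing a single definable cell $C \subset (-1,1)^n$ of dimension $d \leq n$. Projecting $C$ onto its supporting $d$ coordinates and using the cell's description as the graph of definable $C^k$ functions (definable functions are piecewise $C^k$ by the $C^k$-cell decomposition theorem), one obtains a surjective definable $C^k$ map $\varphi : (0,1)^d \to C$ — but with no a priori bound on the derivatives of $\varphi$.

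Third, and this is the technical heart, I would tame the derivatives iteratively. The key one-variable lemma is: for any definable $C^k$ function $g : (0,1) \to [-1,1]$, there is a finite partition of $(0,1)$ into subintervals such that on each piece, after rescaling affinely to $(0,1)$, all derivatives of $g$ up to order $k$ are bounded by $1$. This is proved by o-minimal monotonicity applied successively to $g, g', \dots, g^{(k-1)}$: on each monotonicity interval, the mean value theorem converts a pointwise bound on the next derivative into a bound on the current one, and a further affine rescaling shrinks the derivative by the length factor. The multi-variable version follows by slicing $(0,1)^d$ along each coordinate axis and applying the one-variable result to each partial derivative of each coordinate of $\varphi$; iterating over the finitely many multi-indices of total order $\leq k$ yields a decomposition of $(0,1)^d$ into finitely many boxes on which $|\varphi^{(\alpha)}| \leq 1$ uniformly.

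The main obstacle will be uniformity in the parameter $a$. The recursive taming procedure a priori produces a bound depending on the specific fiber, since at each stage one cuts at zeros of derivatives whose number could drift. However, the entire construction — which cell decomposition is used, where the zeros occur, which affine rescalings are applied — is itself definable as a function of $a$, so the graph of the resulting partition is a definable subset of $\mathbb{R}^m \times \mathbb{R}^d$. Applying Proposition \ref{finiteness} to this graph (or rather to the set of endpoints of the partition intervals, viewed as a definable family indexed by $a$) bounds the number of pieces uniformly by some $M = M(S) \in \mathbb{N}$, completing the proof.
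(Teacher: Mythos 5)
The paper states Theorem~\ref{param} as a cited result (\cite{BD}, Theorem~1.3) and gives no proof of its own, so there is no internal argument to compare your sketch against; what follows evaluates the sketch on its own terms.

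Your overall architecture — cell decomposition, reduction to a single cell, a one-variable derivative-taming lemma bootstrapped to higher dimensions, and uniformity in the parameter $a$ via definability of the entire construction (invoking Proposition~\ref{finiteness}) — is the correct Yomdin--Gromov--Pila--Wilkie template, and it matches the structure of the proof in \cite{BD}.

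The gap is in the one-variable lemma, which is false as you state it: a finite partition of $(0,1)$ followed by \emph{affine} rescaling cannot tame the derivatives of a definable $C^k$ function. Take $g(x)=\sqrt{x}$ on $(0,1)$. Here $g$ is bounded and monotone, and $g'(x)=\tfrac{1}{2\sqrt{x}}$ is already monotone on the whole interval, so o-minimal monotonicity gives you no subdivision at all — yet $g'$ is unbounded near $0$. Restricting to $(0,\varepsilon)$ and rescaling affinely by $\varphi(t)=\varepsilon t$ yields $(g\circ\varphi)'(t)=\tfrac{\sqrt{\varepsilon}}{2\sqrt{t}}$, still unbounded near $t=0$ for every $\varepsilon>0$. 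No finite number of affine reparametrizations removes the singularity, and your ``monotonicity $+$ MVT $+$ affine rescale'' recursion never terminates on this example: the MVT controls $g'$ only at an interior point of each subinterval while the endpoint blow-up persists. The Yomdin--Gromov reparametrization lemma that \cite{BD} actually proves must use \emph{non-affine} substitutions — prototypically power maps such as $\varphi(t)=t^2$, which here gives $g\circ\varphi(t)=t$ and, for the cell $\{(x,\sqrt{x})\}$, the parametrization $t\mapsto(t^2,t)$ with all derivatives bounded. The induction on $k$ in the literature is organized around composing with such substitutions, not around affine rescaling, and that substitution is the technical heart of the theorem rather than a detail. The remaining steps of your sketch (cell decomposition; uniformity in $a$ by definability) would go through once the reparametrization lemma is proved with the correct non-affine substitutions.
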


We use the o-minimal structure $\mathbb{R}_{an}$, the smallest structure that contains every $f:[-1,1]^n\rightarrow\mathbb{R}$ that extends to a real analytic function $U\rightarrow\mathbb{R}$ on some open neighborhood $U\subseteq\mathbb{R}^n$ of $[-1,1]^n$, for $n\geq1$.

The following lemma guarantees that $\mathbb{R}_{an}$ suffices for our purpose.  
\begin{lem}[\cite{J2}, Lemma 2.13]\label{ThetaDefinable} Let $M$ be a 1-cusped hyperbolic 3-manifold and $\Theta_M$ be as in Section \ref{NZvf}. Then $\Theta_M(x,y)$ is definable in $\mathbb{R}_{an}$.
\end{lem}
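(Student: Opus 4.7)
The plan is to realize the graph of $\Theta_M$ as the projection of a set whose defining conditions are confined to a compact disk, by parametrizing the large-$(x,y)$ regime through the Neumann--Zagier deformation variable $u\in\mathbb{C}$.

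By the lemma stated just before the Neumann--Zagier volume formula, $\tau$ is complex analytic on a neighborhood of $u=0$, and by \cite{NZ} the volume-change function $W(u_1,u_2):=\textnormal{vol}(M_u)-\textnormal{vol}(M)$ associated with the incomplete deformation having meridian holonomy logarithm $u=u_1+iu_2$ is real analytic there. Choose $\delta>0$ small enough that both $\tau$ and $W$ extend analytically past $\overline{D}_\delta:=\{|u|\leq\delta\}$ and such that the assignment $u\mapsto(p,q)$ determined implicitly by $pu+qu\tau(u)=2\pi i$ is a real-analytic diffeomorphism from $\overline{D}_\delta\setminus\{0\}$ onto an open set that contains $\{(p,q):p^2+q^2>R^2\}$ for some $R>0$. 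After affinely rescaling $\overline{D}_\delta$ into $[-1,1]^2$, each of $\textnormal{Re}\,\tau$, $\textnormal{Im}\,\tau$, and $W$ becomes a basic building block of $\mathbb{R}_{an}$.

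The complex equation $pu+qu\tau(u)=2\pi i$ splits into the two real equations
\begin{align*}
E_1(x,y,u_1,u_2)&:=u_1\bigl(x+y\,\textnormal{Re}\,\tau(u)\bigr)-u_2\,y\,\textnormal{Im}\,\tau(u)=0,\\
E_2(x,y,u_1,u_2)&:=u_2\bigl(x+y\,\textnormal{Re}\,\tau(u)\bigr)+u_1\,y\,\textnormal{Im}\,\tau(u)-2\pi=0,
\end{align*}
each linear in $(x,y)$ with coefficients definable in $\mathbb{R}_{an}$. Consequently,
\begin{align*}
\Sigma:=\bigl\{(x,y,z,u_1,u_2)\in\mathbb{R}^5:0<u_1^2+u_2^2\leq\delta^2,\ E_1=E_2=0,\ z=W(u_1,u_2)\bigr\}
\end{align*}
is definable in $\mathbb{R}_{an}$, and so is its projection $\pi_{xyz}(\Sigma)\subset\mathbb{R}^3$. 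By the diffeomorphism property, the intersection of $\pi_{xyz}(\Sigma)$ with the semi-algebraic set $\{x^2+y^2>R^2\}\times\mathbb{R}$ is precisely the graph of $\Theta_M$ on its domain of definition, so $\Theta_M$ is definable in $\mathbb{R}_{an}$.

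The principal technical obstacle is the global diffeomorphism property on $\overline{D}_\delta\setminus\{0\}$. Local bijectivity near $u=0$ is immediate from the implicit function theorem, since the linearization of $u\mapsto(p,q)$ at the origin is governed by $|c_1|^2\textnormal{Im}(c_1)>0$; upgrading this to global injectivity on the compact punctured disk requires that the higher-order terms $c_3u^2+\cdots$ in $\tau(u)$ be uniformly dominated, which holds on $\overline{D}_\delta$ for $\delta$ taken sufficiently small.
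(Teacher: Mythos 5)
The paper does not include its own proof of this lemma---it is cited directly from \cite{J2}---so there is nothing in-text to compare against line by line. Your argument is nonetheless sound and is the natural one: use the Neumann--Zagier deformation variable $u$ to pull the unbounded $(x,y)$-region back to a punctured compact disk, realize $\operatorname{Re}\tau$, $\operatorname{Im}\tau$, and the volume-change $W$ as restricted analytic functions on a rescaled copy of that disk, encode the transition $u \leftrightarrow (x,y)$ by the two polynomial equations $E_1=E_2=0$ (polynomial in $x,y$ once $\operatorname{Re}\tau,\operatorname{Im}\tau$ are treated as definable auxiliary coordinates), and recover the graph of $\Theta_M$ as a projection intersected with the semialgebraic set $\{x^2+y^2>R^2\}\times\mathbb{R}$. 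That projection is definable, and the intersection is a genuine graph precisely because each large $(x,y)$ corresponds to a unique $u$.

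The only weak spot is the final paragraph. You flag global injectivity of $u\mapsto(p,q)$ on $\overline{D}_\delta\setminus\{0\}$ as ``the principal technical obstacle'' and sketch a domination argument, but in fact this homeomorphism property is a theorem of Thurston and Neumann--Zagier and is quoted verbatim in Section~\ref{NZvf} of this paper (``The correspondence $u\mapsto(p,q)$ is a homeomorphism from a neighborhood of $0$ in $\mathbb{C}$ onto a neighborhood of $\infty$ in $\mathbb{R}^2\cup\{\infty\}$''). You should simply cite that rather than re-derive it; your sketch via the implicit function theorem at $u=0$ is also slightly off, since the map is singular there (it sends $0$ to $\infty$), and local invertibility is better seen by passing to $1/A$, which is analytic in $u$ near $0$ with nonvanishing derivative $1/(2\pi i)$. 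With that citation substituted for the last paragraph, the proof is complete and matches what one would expect Jeon's argument in \cite{J2} to be.
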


\subsection{Transcendentality}

In order to apply Theorem \ref{PWcount} to a set $\mathcal{X}$, it is necessary to determine its transcendental part $\mathcal{X}^{tr}$. In this subsection, we prove two propositions concerning the transcendence of the Neumann–Zagier volume formula and of a related set.

\begin{prop}\label{transc1}
Let $M$ be a 1-cusped hyperbolic 3-manifold and $\Theta(x,y)$ be as in Section \ref{NZvf}. There is no nonzero $f(x,y,z)\in\mathbb{C}[x,y, z]$ such that
\begin{align*}
    f(x,y,\Theta(x,y))=0
\end{align*}
for sufficiently large $|x|+|y|$.
\end{prop}

\begin{prop}\label{transc2}
Let $M$ be a 1-cusped hyperbolic 3-manifold with its cusp shape $c_1\notin\mathbb{Q}(i)$ and $\Theta(x,y)$ be as in Section \ref{NZvf}. There is no $f(x,y,z,w)\in\mathbb{C}[x,y,z,w]$ such that 
$$\Theta(x,y)=\Theta(z,w)\Longleftrightarrow f(x,y,z,w)=0$$
for sufficiently large $|x|+|y|$ and $|z|+|w|$.
\end{prop}

To prove the propositions, we make use of Ax's theorem, a power series analogue of Schanuel's conjecture and one of the most powerful tools for showing the transcendence of analytic sets.

We say that elements $y_1, \cdots, y_n$ of a $\mathbb{Q}$-vector space have a \textit{$\mathbb{Q}$-linear relation} if $a_1y_1+\cdots+a_ny_n=0$ for some $(a_1,\cdots,a_n)\in\mathbb{Q}^n\setminus\{(0,\cdots,0)\}$; otherwise, they are \textit{$\mathbb{Q}$-linearly independent}. Let $\mathbb{F}$ be an extension field of $\mathbb{E}$. A subset $S\subset\mathbb{F}$ is said to be \textit{$\mathbb{E}$-algebraically independent} if its elements satisfy no nontrivial polynomial relation with coefficients in $\mathbb{E}$. The \textit{transcendence degree} of $\mathbb{F}$ over $\mathbb{E}$, denoted $\textnormal{dim}_\mathbb{E}\,\mathbb{F}$, is defined to be the maximal cardinality of an $\mathbb{E}$-algebraically independent subset of $\mathbb{F}$.

\begin{thm}[\cite{A}]\label{axthm}
Let $y_1, \cdots, y_n\in t\mathbb{C}[[t]]$ be $\mathbb{Q}$-linearly independent. Then
\begin{align*}
    \textnormal{dim}_{\mathbb{C}(t)}\,\mathbb{C}(t)(y_1, \cdots, y_n, e^{y_1}, \cdots, e^{y_n})\geq n.
\end{align*}
\end{thm}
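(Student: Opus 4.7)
The plan is to work in the differential field $K = \mathbb{C}((t))$ with derivation $\partial = d/dt$, whose constant field is $\mathbb{C}$. Set $z_i := e^{y_i} \in 1 + t\mathbb{C}[[t]]$; the defining relation $\partial z_i = (\partial y_i)\, z_i$ is the bridge between the $y_i$ and the $z_i$. Let $F := \mathbb{C}(t)(y_1, \ldots, y_n, z_1, \ldots, z_n)$; the goal is $\mathrm{tr.deg}_{\mathbb{C}(t)} F \geq n$, reading the $\dim$ in the statement as transcendence degree (the standard form of Ax's theorem).

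Proceed by contradiction and induction on $n$, with the base case $n = 0$ vacuous. For the inductive step, suppose $\mathrm{tr.deg}_{\mathbb{C}(t)} F \leq n - 1$. The induction hypothesis, applied to the still $\mathbb{Q}$-linearly independent subfamily $y_1, \ldots, y_{n-1}$, gives $\mathrm{tr.deg}_{\mathbb{C}(t)} F' \geq n - 1$ for $F' := \mathbb{C}(t)(y_1, \ldots, y_{n-1}, z_1, \ldots, z_{n-1})$. Since $F = F'(y_n, z_n)$ has transcendence degree at most $n - 1$, both $y_n$ and $z_n$ are algebraic over $F'$; in particular $z_n$ is algebraic over $F'' := F'(y_n)$.

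Let $P(X) = X^d + a_{d-1} X^{d-1} + \cdots + a_0 \in F''[X]$ be the minimal polynomial of $z_n$. Differentiating $P(z_n) = 0$ and substituting $\partial z_n = (\partial y_n)\, z_n$ gives $P^\partial(z_n) + (\partial y_n)\, z_n\, P'(z_n) = 0$, where $P^\partial$ applies $\partial$ to each coefficient. Rewriting $z_n P'(z_n)$ in terms of lower powers via $P(z_n) = 0$ and invoking minimality of $P$ forces the resulting polynomial to vanish in $F''[X]$, yielding a system of differential relations tying the $a_i$, their $\partial$-images, and $\partial y_n$. A careful bookkeeping — most cleanly carried out by factoring $P$ over an algebraic closure of $F''$ and tracking how the symmetric functions of its roots evolve under $\partial$ — extracts a nontrivial $\mathbb{Q}$-linear combination $\beta_0 + \beta_1 y_1 + \cdots + \beta_n y_n$ whose derivative is zero. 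Because the constant field of $K$ is $\mathbb{C}$, this combination lies in $\mathbb{C}$; since each $y_i \in t\mathbb{C}[[t]]$ has zero constant term, this forces $\beta_1 y_1 + \cdots + \beta_n y_n = 0$, contradicting $\mathbb{Q}$-linear independence.

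The main obstacle is that extraction step in the third paragraph: turning the algebraic constraints on the coefficients $a_i$ into an explicit $\mathbb{Q}$-linear relation among the $y_i$. This is the technical heart of Ax's original argument \cite{A}, requiring passage to an algebraic closure and a careful analysis of how integer ``multiplier'' eigenvalues emerge when differentiating symmetric functions of the conjugates of $z_n$. Since the present paper invokes this theorem only as a black box, the practical alternative is simply to cite \cite{A} for the full proof rather than reproduce it.
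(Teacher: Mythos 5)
The paper does not prove this statement at all: it is labeled \textup{[\cite{A}]} and invoked purely as a black box, so there is no ``paper proof'' to compare against. Your write-up is therefore best read as a self-contained sketch, and as such it contains a genuine gap at precisely the step you yourself flag. Setting up $K=\mathbb{C}((t))$ with $\partial=d/dt$, constant field $\mathbb{C}$, and $z_i=e^{y_i}$ satisfying $\partial z_i=(\partial y_i)z_i$ is the right framework, and the induction reduces correctly to the situation where $y_n$ and $z_n$ are algebraic over $F'=\mathbb{C}(t)(y_1,\ldots,y_{n-1},z_1,\ldots,z_{n-1})$. But the passage from ``$z_n$ is algebraic over $F''=F'(y_n)$'' to ``there is a nontrivial $\mathbb{Q}$-linear combination $\beta_0+\sum\beta_i y_i$ with zero derivative'' is not a bookkeeping exercise: it \emph{is} Ax's theorem. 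Comparing coefficients in $(\partial y_n)\,z_n P'(z_n)+P^{\partial}(z_n)\equiv 0 \pmod{P}$ yields, from the $X^{d-1}$ term, only $\partial a_{d-1}/a_{d-1}=\partial y_n$, i.e.\ that $e^{y_n}$ agrees up to a constant with an element of $F''$; turning this and its siblings into the stated $\mathbb{Q}$-linear relation requires a separate differential-algebraic dichotomy (Kolchin--Ostrowski type: either a $\mathbb{Q}$-linear combination of the logarithmic data lands in the base, or a monomial in the exponentials does). Without that lemma the induction does not close. Since you explicitly conclude by recommending a citation of \cite{A}, you end up in the same place as the paper; just be aware that the preceding three paragraphs are an outline of the strategy, not a proof, and that the ``careful bookkeeping'' you defer is the entire content of the result.
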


Here, $\mathbb{C}[[t]]$ denotes the ring of formal power series with complex coefficients.

Recall that $m=e^u$ and $l=e^v$ parametrize the holonomy variety $\mathcal{X}$, which implies $e^t$ and $e^{c_1t+c_3t^3+\cdots}$ are $\mathbb{Q}$-algebraically independent. By symmetry, $e^t$ and $e^{\overline{c_1}t+\overline{c_3}t^3+\cdots}$ are also $\mathbb{Q}$-algebraically dependent, and hence so are $e^t$ and $e^{\text{Im}(c_1)t+\text{Im}(c_3)t^3+\cdots}$.

The following lemma is analogous to, but stronger than, Lemma 2.3 of \cite{J2}. The proof is based on a similar idea.

\begin{lem}\label{Qlin}
In $\mathbb{C}[[t]]$, $t$ and $\sum \textnormal{Im}(c_n)t^n$ are linearly independent.
\end{lem}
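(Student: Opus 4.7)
The plan is to argue by contradiction using the algebraic dependence of $e^t$ and $e^{v(t)}$ (from the A-polynomial, as established just above the lemma) together with Ax's theorem (Theorem \ref{axthm}). Suppose, toward contradiction, that $t$ and $h(t):=\sum\textnormal{Im}(c_n)t^n$ admit a nontrivial $\mathbb{Q}$-linear relation. Since $t\neq 0$, the relation forces $h(t)=\lambda t$ for some $\lambda\in\mathbb{Q}$; matching coefficients gives $\textnormal{Im}(c_1)=\lambda$ and $\textnormal{Im}(c_n)=0$ for every odd $n\geq 3$. Because $c_1$ lies in the upper half plane we have $\lambda=\textnormal{Im}(c_1)>0$, so in particular $\lambda\neq 0$.

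Next, I would promote the identity $A(e^u,e^{v(u)})=0$ to a conjugate identity. Let $\sigma\colon\sum a_nu^n\mapsto\sum\overline{a_n}\,u^n$ be the coefficient-conjugation automorphism of $\mathbb{C}[[u]]$; it is an $\mathbb{R}$-algebra automorphism fixing $u$ (hence $e^u$) and sending $e^{v(u)}$ to $e^{\overline{v}(u)}$, where $\overline{v}(u):=\sum\overline{c_n}\,u^n$. Applying $\sigma$ to $A(e^u,e^{v(u)})=0$ and using that $A\in\mathbb{Z}[x,y]$ (so its coefficients are $\sigma$-fixed) yields $A(e^u,e^{\overline{v}(u)})=0$. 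Because $e^u$ is transcendental over $\mathbb{C}$ inside $\mathbb{C}[[u]]$, $A(e^u,Y)$ is a nonzero polynomial in $Y$ over $\mathbb{C}(e^u)$, so both $e^{v(u)}$ and $e^{\overline{v}(u)}$ are algebraic over $\mathbb{C}(e^u)$, and hence so is their ratio
\[e^{v(u)-\overline{v}(u)}=e^{2i\cdot h(u)}=e^{2i\lambda u}.\]

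Finally, I would apply Ax's theorem to $y_1:=u$ and $y_2:=2i\lambda u$ in $u\mathbb{C}[[u]]$. Any $\mathbb{Q}$-linear relation $\alpha y_1+\beta y_2=0$ forces $\alpha+2i\lambda\beta=0$ in $\mathbb{C}$; since $\lambda$ is a nonzero real, this gives $\alpha=\beta=0$, so $y_1$ and $y_2$ are $\mathbb{Q}$-linearly independent. Ax's theorem then yields $\textnormal{dim}_{\mathbb{C}(u)}\,\mathbb{C}(u)(e^u,e^{2i\lambda u})\geq 2$, so $e^{2i\lambda u}$ is transcendental over $\mathbb{C}(u)(e^u)$ and \emph{a fortiori} over $\mathbb{C}(e^u)$ --- contradicting the algebraicity established in the previous paragraph.

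The main technical care will lie in the middle step: one must verify both that the formal coefficient conjugation really produces the companion identity $A(e^u,e^{\overline{v}})=0$ (using integrality of the A-polynomial together with the fact that $\sigma$ commutes with the formal exponential of a power series), and that $A(e^u,Y)$ is nonzero as a polynomial in $Y$ over $\mathbb{C}(e^u)$ (which reduces to $A\neq 0$ and the transcendence of $e^u$ over $\mathbb{C}$ in $\mathbb{C}[[u]]$).
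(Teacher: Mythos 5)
Your proof is correct, and it takes a genuinely different route from the paper's. Both proofs begin the same way: a nontrivial $\mathbb{Q}$-linear relation forces $\sum\text{Im}(c_n)t^n=\text{Im}(c_1)t$, i.e.\ $\text{Im}(c_n)=0$ for all $n\geq3$, and both exploit the A-polynomial relation $A(e^u,e^{v(u)})=0$. After that they diverge. The paper \emph{specializes} to $t=\frac{2\pi i}{p}$ for large $p\in\mathbb{N}$: then $e^t$ is a $p$-th root of unity, so $e^{v(t)}$ is an algebraic number, hence so is its modulus $e^{-\frac{2\pi}{p}\text{Im}(c_1)}=(-1)^{\frac{2i}{p}\text{Im}(c_1)}$ --- contradicting the Gelfond--Schneider theorem (Theorem \ref{GelSch}). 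You instead stay entirely inside $\mathbb{C}[[u]]$: you use the real (integer) coefficients of the A-polynomial to manufacture the conjugate identity $A(e^u,e^{\overline v(u)})=0$ by coefficient conjugation, conclude that $e^{v-\overline v}=e^{2i\lambda u}$ is algebraic over $\mathbb{C}(e^u)$, and then contradict this with Ax's theorem applied to $\{u,\,2i\lambda u\}$. The paper's argument is analytic/number-theoretic (it evaluates the formal series at actual algebraic points and needs a separate classical transcendence theorem); yours is purely formal and reuses Ax's theorem, which is already the engine for Theorems \ref{transc1} and \ref{transc2}, so it is arguably more self-contained within the paper's toolkit. One small point worth making explicit: you deduce that $A(e^u,Y)$ is a nonzero element of $\mathbb{C}(e^u)[Y]$ from the transcendence of $e^u$, but to conclude that $e^{v(u)}$ is algebraic you also need $A(e^u,Y)$ to be non-constant in $Y$; that follows immediately from $A(e^u,e^{v(u)})=0$ (a nonzero constant in $Y$ cannot vanish at $e^{v(u)}$), but it should be said. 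Otherwise the argument is sound.
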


To prove the lemma, we introduce the Gelfond-Schneider theorem in transcendental number theory.

\begin{thm}[\cite{B}, Theorem 2.4 in the $n=1$ case]\label{GelSch}
If $\alpha$ and $\beta$ are elements in $\overline{\mathbb{Q}}$ such that $\alpha\neq0,1$ and  $\beta\notin\mathbb{Q}$, then $\alpha^\beta$ is transcendental.
\end{thm}

\begin{proof}[Proof of Lemma \ref{Qlin}]
First, note that if $t$ and $\sum \text{Im}(c_n)t^n$ are linearly dependent in $\mathbb{C}[[t]]$, then $\text{Im}(c_n)=0$ for all $n\geq3$. 

Since $e^t$ and $e^{c_1t+c_3t^3+\cdots}$ are $\mathbb{Q}$-algebraically dependent, if we set $t=\frac{2\pi i}{p}$ with sufficiently large $p\in\mathbb{N}$, then $e^t, e^{c_1t+c_3t^3+\cdots}\in \overline{\mathbb{Q}}$ and so $|e^{c_1t+c_3t^3+\cdots}|=e^{\textnormal{Re}(c_1t+c_3t^3+\cdots)}\in \overline{\mathbb{Q}}$. 

Since $\textnormal{Re}(t)=0$ and $\text{Im}(c_n)=0$ for $n\geq3$, 
$$\textnormal{Re}(c_1t+c_3t^3+\cdots)=\textnormal{Re}(c_1t)$$
and so 
\begin{align}\label{25043006}
    e^{\textnormal{Re}(c_1t+c_3t^3+\cdots)}=e^{\textnormal{Re}(c_1t)}=e^{-\frac{2\pi}{p}\textnormal{Im}(c_1)}=(e^{\pi i})^{\frac{2i}{p}\textnormal{Im}(c_1)}=(-1)^{\frac{2i}{p}\textnormal{Im}(c_1)}.
\end{align}
But the last number in \eqref{25043006} is transcendental by Theorem \ref{GelSch}, contradicting the previous claim.  
\end{proof}

Now we prove Proposition \ref{transc1}.

\begin{proof}[Proof of Proposition \ref{transc1}]
Suppose there is such $f(x,y,z)\in\mathbb{C}[x,y, z]$. By substituting $y=0$, we get $f(x,0,\Theta(x,0))=0$ where
\begin{align*}
    \Theta(x,0)=\frac{-\text{Im}(c_1)\pi^2}{x^2}+\frac{1}{4}\left(\frac{1}{2}\text{Im}(c_3)\frac{(2\pi)^4}{x^4}-\frac{1}{3}\text{Im}(c_5)\frac{(2\pi)^6}{x^6}+\cdots\right).
\end{align*}
If we let $t=\frac{2\pi i}{x}$, then the following formal power series 
\begin{align*}
    \frac{1}{2}\left(\text{Im}(c_1)t^2+\frac{1}{2}\text{Im}(c_3)t^4+\frac{1}{3}\text{Im}(c_3)t^6+\cdots\right)
\end{align*}
and its formal derivative $\sum \text{Im}(c_n)t^n$ are all algebraic over $\mathbb{C}(t)$. If we set
\begin{align*}
    y_1=t\;\;\text{and}\;\; y_2=\text{Im}(c_1)t+\text{Im}(c_3)t^3+\cdots, 
\end{align*}
then, clearly $y_1, y_2\in t\mathbb{C}[[t]]$, and they are $\mathbb{Q}$-linearly independent by Lemma \ref{Qlin}. Further,  
\begin{align*}
    \textnormal{dim}_{\mathbb{C}(t)}\,\mathbb{C}(t)(y_1, y_2, e^{y_1}, e^{y_2})\geq 2
\end{align*}
by Theorem \ref{axthm}. On the other hand, since both $y_1$ and $y_2$ are algebraic over $\mathbb{C}(t)$, and $e^{y_1}$ and $e^{y_2}$ are $\mathbb{C}$-algebraically dependent, it follows that 
\begin{align*}
    \textnormal{dim}_{\mathbb{C}(t)}\,\mathbb{C}(t)(y_1, y_2, e^{y_1}, e^{y_2})\leq 1,
\end{align*} 
which is a contradiction.
\end{proof}

The proof of Proposition \ref{transc2} is based on a similar argument as the previous one, but requires a few more lemmas. 

We begin with the following lemma, whose proof is due to Seewoo Lee.

\begin{lem}\label{numb1}
Let $c_1$ be a nonreal algebraic number. Then 
\begin{align*}
    |a+bc_1|\notin\mathbb{Q}(\textnormal{Re}(c_1), \textnormal{Im}(c_1))
\end{align*}
for infinitely many coprime integer pairs $(a,b)$.
\end{lem}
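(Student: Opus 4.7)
Write $c_1 = \alpha + i\beta$ with $\alpha = \textnormal{Re}(c_1)$ and $\beta = \textnormal{Im}(c_1) \neq 0$. Since $c_1$ is algebraic, so are $\alpha$ and $\beta$, hence $K := \mathbb{Q}(\alpha, \beta)$ is a number field contained in $\mathbb{R}$. The identity $|a+bc_1|^2 = (a+b\alpha)^2 + (b\beta)^2$ lies in $K$, and because $K \subset \mathbb{R}$, the positive real $|a+bc_1|$ belongs to $K$ if and only if $|a+bc_1|^2$ is a square in $K$.

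The strategy is to restrict to $b = 1$, which makes coprimality automatic, and then to apply Hilbert's Irreducibility Theorem. Consider $P(t) := (t+\alpha)^2 + \beta^2 \in K[t]$. First I would show $P(t)$ is not a square in $K(t)$: if $P(t) = g(t)^2$ for some $g \in K(t)$, then $g$ must in fact be a polynomial (as $P$ is) of degree $1$ with leading coefficient $\pm 1$, so $g(t) = \pm(t+c)$ for some $c \in K$. Expanding and matching coefficients forces $c = \alpha$ and $c^2 = \alpha^2 + \beta^2$, contradicting $\beta \neq 0$.

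Consequently, $y^2 - P(t)$ is irreducible in $K(t)[y]$. Since number fields are Hilbertian, Hilbert's Irreducibility Theorem produces an infinite set of integers $a$ (whose complement is thin in $\mathbb{Z}$) for which $y^2 - P(a)$ is still irreducible in $K[y]$; equivalently, $P(a) = (a+\alpha)^2 + \beta^2$ is not a square in $K$, so $|a+c_1| \notin K$. Since every pair $(a,1)$ is coprime, this yields infinitely many coprime integer pairs $(a,b)$ with $|a+bc_1| \notin K$.

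The only nontrivial step is the polynomial irreducibility of $P(t)$ in $K(t)$, handled by the short degree-matching argument above; the rest is a direct application of HIT to the number field $K$.
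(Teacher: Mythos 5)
Your proof is correct and follows essentially the same strategy as the paper's: reduce the statement to irreducibility of a quadratic over the number field $K := \mathbb{Q}(\mathrm{Re}\,c_1, \mathrm{Im}\,c_1)$ and apply Hilbert's Irreducibility Theorem. The differences are in execution. The paper treats the three-variable quadric $f(X,Y,Z) = X^2 - (Y+ZR)^2 - (ZI)^2$, applies its cited form of HIT (Theorem \ref{HilbertIrr}) once to obtain a single rational pair $(\alpha,\beta)$, scales to coprime integers, and then obtains infinitely many pairs by iterating with the ground field replaced by $K(|a+bc_1|)$. You instead fix $b=1$ from the outset and invoke the stronger fact that the complement of a thin subset of $\mathbb{Z}$ is infinite. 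Fixing $b=1$ is a genuine simplification: coprimality becomes automatic and the irreducibility of a three-variable form is replaced by the one-line check that $(t+\alpha)^2 + \beta^2$ is not a square in $K(t)$. Do note, though, that the paper's Theorem \ref{HilbertIrr} as stated only guarantees a single good rational specialization outside the exceptional set $T$ --- the paper explicitly remarks that it only needs the weaker property that thin sets do not contain $\mathbb{Q}^m$ --- so your claim of ``an infinite set of integers $a$'' leans on the quantitative fact (in the paper's reference \cite{S}) that thin subsets of $\mathbb{Z}$ have density zero. If you prefer to stay within the letter of the cited theorem, you can instead run the same field-enlargement iteration the paper does; your non-square argument goes through verbatim over any real extension of $K$, since it uses only $\beta \neq 0$.
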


To prove the lemma, we apply Hilbert's irreducibility theorem. While the theorem is stated in many different forms, we adapt a simplified version presented in \cite{EMSW}. For more details, please see Proposition 3.3.5 in \cite{S}.

\begin{thm}[\cite{EMSW}, Theorem 2.9 in a simpler form]\label{HilbertIrr}
Let $\mathbb{K}$ be a number field and $f(x_0, x_1, \cdots, x_m)\in \mathbb{K}[x_0, x_1, \cdots, x_m]$ be an irreducible polynomial. There exists a set $\mathcal{T}\subseteq \mathbb{K}^m$ not containing $\mathbb{Q}^m$ such that if $(k_1, \cdots, k_m)\in \mathbb{K}^m\setminus \mathcal{T}$, then $f(x_0, k_1, \cdots, k_m)\in \mathbb{K}[x_0]$ is irreducible.
\end{thm}

The set $\mathcal{T}$ can be taken to have the special property of ``thinness". Intuitively, a \textit{thin} set is one that lies in the image of a map from a lower-dimensional algebraic variety and is, in a sense, ``small'' from an algebraic point of view. The only fact we need, however, is that thin sets in $\mathbb{K}^m$ cannot contain $\mathbb{Q}^m$. See Propositions 3.2.1 and 3.4.1 of \cite{S}.

\begin{proof}[Proof of Lemma \ref{numb1}]
Let $R:=\textnormal{Re}(c_1)$, $I:=\textnormal{Im}(c_1)\neq0$ and $\mathbb{K}:=\mathbb{Q}(R, I)$. Let $\alpha, \beta$ be some rational numbers. Since $|\alpha+\beta c_1|^2=(\alpha+\beta R)^2+(\beta I)^2$, clearly $|\alpha+\beta c_1|\notin \mathbb{K}$ if and only if the polynomial $x_0^2-(\alpha+\beta R)^2-(\beta I)^2$ is irreducible over $\mathbb{K}$.

Now consider $f(x_0,x_1,x_2):=x_0^2-(x_1+Rx_2)^2-(Ix_2)^2$. As $x_0^2-x_1^2-x_2^2$ is irreducible over $\mathbb{C}$, it follows that $f$ is irreducible over $\mathbb{K}$. Applying Theorem \ref{HilbertIrr}, we obtain $(\alpha, \beta)\in\mathbb{Q}^2$ such that $f(x_0,\alpha,\beta)\in \mathbb{K}[x_0]$ is irreducible. Therefore $|\alpha+\beta c_1|\notin \mathbb{K}$, which further implies there exists a coprime pair $(a,b)\in \mathbb{Z}^2$ such that $|a+bc_1|\notin \mathbb{K}$.

Replacing $\mathbb{K}$ by $\mathbb{K}':=\mathbb{K}(|a+bc_1|)$, the same argument yields another coprime pair $(a',b')\in \mathbb{Z}^2$ such that $|a'+b'c_1|\notin \mathbb{K}'$. Repeating the process will complete the proof.
\end{proof}

The following lemma, which is proved using the lemma above, is elementary. It will be applied in the proof of Proposition \ref{transc2} to establish the $\mathbb{Q}$-linear independence of certain series.

\begin{lem}\label{numb2}
Let $c_1$ be an algebraic number such that $c_1\notin\mathbb{Q}(i)$. Then there exist infinitely many coprime integer pairs $(a,b)$ such that if there is a nontrivial integer solution $(\alpha, \beta, \gamma, \delta)$ of the equation
\begin{align}\label{Qlineq}
    \alpha+\beta\textnormal{Im}(c_1)+\gamma|a+bc_1|+\delta\frac{\textnormal{Im}(c_1)}{|a+bc_1|}=0,
\end{align}
then either $\alpha=\beta=0$ or $\gamma=\delta=0$.
\end{lem}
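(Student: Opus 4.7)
The plan is to multiply the hypothesized relation by $r := |a+bc_1|$, thereby reducing a $\mathbb{Q}$-linear relation among $\{1, I, r, I/r\}$ (where $I := \textnormal{Im}(c_1)$ and $R := \textnormal{Re}(c_1)$) to an equation inside the number field $K := \mathbb{Q}(R, I)$. After multiplication the relation reads
\[
(\alpha + \beta I)\, r + (\gamma r^2 + \delta I) = 0,
\]
and the crucial observation is that $r^2 = (a+bR)^2 + b^2 I^2 \in K$, so both $\gamma r^2 + \delta I$ and $\alpha + \beta I$ lie in $K$.

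The first step is to invoke Lemma \ref{numb1} to select infinitely many coprime pairs $(a,b)$ with $r \notin K$. Suppose for contradiction that a nontrivial solution $(\alpha, \beta, \gamma, \delta)$ satisfies both $(\alpha, \beta) \neq 0$ and $(\gamma, \delta) \neq 0$. If $\alpha + \beta I \neq 0$, the displayed equation would solve for $r$ as an element of $K$, contradicting the choice of $(a,b)$. Hence $\alpha + \beta I = 0$, and consequently $\gamma r^2 + \delta I = 0$.

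If $I$ is irrational, then $\alpha + \beta I = 0$ with integers $\alpha, \beta$ forces $\alpha = \beta = 0$, contradicting $(\alpha, \beta) \neq 0$. The more delicate case is $I \in \mathbb{Q}$, which, combined with $c_1 \notin \mathbb{Q}(i)$, forces $R$ to be irrational. Now $\alpha + \beta I = 0$ admits nontrivial integer solutions, so I extract the contradiction from $\gamma r^2 = -\delta I \in \mathbb{Q}$: if $r^2 \notin \mathbb{Q}$, this forces $\gamma = 0$ and then $\delta = 0$, contradicting $(\gamma, \delta) \neq 0$. Expanding $r^2 = a^2 + b^2 I^2 + 2abR + b^2 R^2$ shows that $r^2 \in \mathbb{Q}$ requires $2abR + b^2 R^2 \in \mathbb{Q}$, which cuts out only a thin subset of $\mathbb{Z}^2$ (namely $b = 0$ when $[\mathbb{Q}(R):\mathbb{Q}] \geq 3$, and $b = 0$ or $2a + b\nu = 0$ when $R^2 = \mu + \nu R$ with $\mu, \nu \in \mathbb{Q}$).

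The main obstacle will be arranging both conditions --- $r \notin K$ and (when $I \in \mathbb{Q}$) $r^2 \notin \mathbb{Q}$ --- to hold simultaneously for infinitely many coprime $(a,b)$. I plan to handle this by iterating the Hilbert-irreducibility argument of Lemma \ref{numb1}: start with the base field $K$, choose a coprime $(a_1, b_1)$ with $r_1 \notin K$ and $r_1^2 \notin \mathbb{Q}$ by excluding the thin exceptional set from Theorem \ref{HilbertIrr} together with the thin set computed above, then replace $K$ by $K(r_1)$ and repeat. Since no finite union of thin subsets of $\mathbb{Q}^2$ contains $\mathbb{Z}^2$, this procedure yields the required infinite family of coprime pairs and completes the proof.
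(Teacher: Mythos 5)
Your proposal is correct, and it shares the paper's initial reduction: multiply \eqref{Qlineq} by $r := |a+bc_1|$ to obtain $(\alpha + \beta I)r + \gamma r^2 + \delta I = 0$ with $r^2 \in K := \mathbb{Q}(R,I)$, observe that $\alpha + \beta I \neq 0$ would force $r \in K$, and note that $I \in \mathbb{Q}$ is the only remaining case. Where you diverge is the treatment of that hard case. The paper runs a contradiction argument at the outer level: it assumes only finitely many good pairs exist, applies Lemma~\ref{numb1} as a black box to get infinitely many coprime $(a,b)$ with $r \notin K$ (for which $\alpha + \beta I = 0$ and $\gamma r^2 + \delta I = 0$ must hold), deduces $r^2 = -\delta I/\gamma \in \mathbb{Q}$, and from
\[
2aR + b|c_1|^2 = \tfrac{1}{b}\bigl(-\tfrac{\delta}{\gamma}I - a^2\bigr) \in \mathbb{Q}
\]
for infinitely many coprime pairs (hence two with $ab' \neq a'b$) concludes $R \in \mathbb{Q}$, giving $c_1 \in \mathbb{Q}(i)$. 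Your version instead builds the exclusion $r^2 \notin \mathbb{Q}$ directly into the Hilbert-irreducibility iteration, avoiding at each step the union of the Hilbert thin set and the set $\{r^2 \in \mathbb{Q}\}$; your computation that the latter is contained in at most two lines (depending on whether $[\mathbb{Q}(R):\mathbb{Q}] \geq 3$ or $=2$) is correct, and the union of thin sets is thin and misses $\mathbb{Q}^2$. Both routes are valid. The paper's is a bit more economical because it reuses Lemma~\ref{numb1} unchanged, whereas yours requires re-opening its proof to add one more exclusion; on the other hand, yours is arguably more transparent because it specifies exactly which properties of $(a,b)$ are needed rather than extracting them a posteriori from a cardinality contradiction. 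One point worth making explicit in your write-up: the passage from a rational point outside the thin sets to a \emph{coprime integer} pair uses that both conditions, $r \notin K$ and $r^2 \notin \mathbb{Q}$, are invariant under scaling $(\alpha,\beta) \mapsto \lambda(\alpha,\beta)$ for $\lambda \in \mathbb{Q}^\times$; this is the same scaling step already used silently in Lemma~\ref{numb1}, but since you add a second condition it is worth noting that it too is scale-invariant.
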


\begin{proof}
Suppose there are only finitely many such pairs $(a,b)$. That is, for all but finitely many coprime pairs $(a,b)\in\mathbb{Z}^2$, there exists a nontrivial integer solution $(\alpha, \beta, \gamma, \delta)$ to \eqref{Qlineq} with $(\alpha,\beta)\neq(0,0)$ and $(\gamma,\delta)\neq(0,0)$. Multiplying $|a+bc_1|$ to \eqref{Qlineq}, we obtain
\begin{align*}
    (\alpha+\beta\textnormal{Im}(c_1))|a+bc_1|+\gamma|a+bc_1|^2+\delta\textnormal{Im}(c_1)=0.
\end{align*}
Note that $|a+bc_1|^2=a^2+2ab\textnormal{Re}(c_1)+b^2|c_1|^2$. If $\alpha+\beta\textnormal{Im}(c_1)\neq0$, then $|a+bc_1|\in\mathbb{Q}(\textnormal{Re}(c_1), \textnormal{Im}(c_1))$. Thus, by Lemma \ref{numb1}, there are infinitely many coprime pairs $(a,b)$ such that $\alpha+\beta\textnormal{Im}(c_1)=0$, which implies $\textnormal{Im}(c_1)\in \mathbb{Q}$ as well as $$2a\textnormal{Re}(c_1)+b|c_1|^2=\frac{1}{b}\left(-\frac{\delta}{\gamma}\textnormal{Im}(c_1)-a^2\right)\in \mathbb{Q}.$$ In conclusion, $\textnormal{Re}(c_1)\in \mathbb{Q}$ and so $c_1\in\mathbb{Q}(i)$. But this contradicts our initial assumption.
\end{proof}

\begin{proof}[Proof of Proposition \ref{transc2}]
Let $a$ and $b$ be integers satisfying the condition of Lemma \ref{numb2}, and $c$ and $d$ be integers satisfying $ad-bc=1$. Let $\tilde{\Theta}$ be the function defined in \eqref{Thetatilde} with respect to the basis change $\tilde{\mathbf{m}}=\mathbf{m}^a\mathbf{l}^b, \tilde{\mathbf{l}}=\mathbf{m}^c\mathbf{l}^d$.

Suppose there is a polynomial satisfying the given condition. Through a change of variables, we may assume that there exists $f(x,y,z,w)\in\mathbb{C}[x,y, z,w]$ such that 
$$\Theta(x,y)=\tilde{\Theta}(z,w)\Longleftrightarrow f(x,y,z,w)=0$$ 
for sufficiently large $|x|+|y|$ and $|z|+|w|$. By substituting $y=w=0$, we get 
$$\Theta(x,0)=\tilde{\Theta}(z,0)\Longleftrightarrow  f(x,0,z,0)=0$$ 
for sufficiently large $|x|$ and $|z|$, where
\begin{align*}
    \Theta(x,0)&=\frac{-\text{Im}(c_1)\pi^2}{x^2}+\frac{1}{4}\left(\frac{1}{2}\text{Im}(c_3)\frac{(2\pi)^4}{x^4}-\frac{1}{3}\text{Im}(c_5)\frac{(2\pi)^6}{x^6}+\cdots\right),\\
    \tilde{\Theta}(z,0)&=\frac{-\text{Im}(\tilde{c_1})\pi^2}{z^2}+\frac{1}{4}\left(\frac{1}{2}\text{Im}(\tilde{c_3})\frac{(2\pi)^4}{z^4}-\frac{1}{3}\text{Im}(\tilde{c_5})\frac{(2\pi)^6}{z^6}+\cdots\right).
\end{align*}

Simplifying the notation by letting $t=\frac{2\pi i}{x}$ and $s=\frac{2\pi i}{z}$, we get the following curve
\begin{align}\label{setst}
\left\{(t,s)\in\mathbb{R}^2 : \sum_{n=1}^\infty\frac{\text{Im}(c_n)}{n+1}t^{n+1}=\sum_{n=1}^\infty\frac{\text{Im}(\tilde{c_n})}{n+1}s^{n+1}\right\}
\end{align}
is also algebraic. Near $t=0$, if one may view $s$ as a formal power series in $t$, then, since $\text{Im}(\tilde{c_n})=\frac{\text{Im}(c_n)}{|a+bc_1|^2}$, it follows that $s=\pm|a+bc_1|t+\cdots.$ By a slight abuse of notation, we choose one branch (the positive sign) of the algebraic curve and denote it by $s$. Then both $s$ and $ds/dt$ are algebraic over $\mathbb{C}(t)$. By differentiating the defining formula of \eqref{setst} by $t$, it follows that 
\begin{align}\label{Ctdep}
\sum_{n=1}^\infty\text{Im}(c_n)t^{n}-\frac{ds}{dt}\sum_{n=1}^\infty\text{Im}(\tilde{c_n})s^{n}=0, 
\end{align}
which implies $\sum \text{Im}(c_n)t^n$ and $\sum \text{Im}(\tilde{c_n})s^n$ are $\mathbb{C}(t)$-algebraically dependent.

Now, let
\begin{align*}
    y_1&=t,&&\\
    y_2&=\text{Im}(c_1)t+\text{Im}(c_3)t^3+\cdots,&&\\
    y_3&=s&&=|a+bc_1|t+\cdots,\\
    y_4&=\text{Im}(\tilde{c_1})s+\text{Im}(\tilde{c_3})s^3+\cdots&&=\frac{\text{Im}(c_1)}{|a+bc_1|}t+\cdots.
\end{align*}
Clearly, $y_1, y_2, y_3, y_4\in t\mathbb{C}[[t]]$. If they are $\mathbb{Q}$-linearly dependent, then the relation of their coefficients of the 1st order terms is of the form given in \eqref{Qlineq}. By Lemma \ref{numb2}, such a $\mathbb{Q}$-linear relation can only occur between $t$ and $\sum \text{Im}(c_n)t^n$, or between $s$ and $\sum \text{Im}(\tilde{c_n})s^n$; but both contradict Lemma \ref{Qlin}. Consequently, $y_1, y_2, y_3$, and $y_4$ are $\mathbb{Q}$-linearly independent, and so 
\begin{align}\label{25042001}
    \textnormal{dim}_{\mathbb{C}(t)}\,\mathbb{C}(t)(y_1, y_2, y_3, y_4, e^{y_1}, e^{y_2}, e^{y_3}, e^{y_4})\geq 4
\end{align}
by Theorem \ref{axthm}. 

On the other hand, both $y_1$ and $y_3$ are algebraic over $\mathbb{C}(t)$, and $y_2$ and $y_4$ are $\mathbb{C}(t)$-algebraically dependent from \eqref{Ctdep}. Moreover, as explained in the discussion preceding Lemma \ref{Qlin}, $e^{y_1}$ and $e^{y_2}$ are $\mathbb{C}$-algebraically dependent, and similarly for $e^{y_3}$ and $e^{y_4}$. Therefore,
\begin{align*}
    \textnormal{dim}_{\mathbb{C}(t)}\,\mathbb{C}(t)(y_1, y_2, y_3, y_4, e^{y_1}, e^{y_2}, e^{y_3}, e^{y_4})\leq 3,
\end{align*}
contradicting \eqref{25042001}.
\end{proof}

\noindent \textbf{Remark. } In fact, what we proved in the preceding proof is somewhat stronger than the statement of the proposition. The proof essentially claims that the following curve
\begin{align}\label{abcurve}
    \{(x,z)\in\mathbb{R}^2 : \textnormal{vol}\,M(x,0)=\textnormal{vol}\,M(az,bz)\}
\end{align}
is transcendental for infinitely many coprime integer pairs $(a,b)$. Recall from Section \ref{NZvf} that $\tilde{\Theta}(x,y)=\Theta(az+cw,bz+dw)$. Therefore, if the curve $\{(x,z)\in\mathbb{R}^2 : \Theta(x,0)=\tilde{\Theta}(z,0)\}$ in the proof is transcendental, then the curve \eqref{abcurve} is also transcendental.

\section{Proof of Theorem \ref{slow}}\label{SecSlow}

Finally, we prove our last main result, Theorem \ref{slow}, in this section. Before embarking on the proof, we introduce a few more ingredients and outline its key idea. 

\subsection{A few more ingredients and the key idea}\label{sub:key_idea}

The following is an intermediate result of the Pila–Wilkie counting theorem. A \textit{hypersurface in $\mathbb{R}^n$ of degree $\leq e$} is defined to be the zero set in $\mathbb{R}^n$ of a nonzero $n$-variable polynomial over $\mathbb{R}$ of total degree $\leq e$.

\begin{thm}[\cite{BD}, Theorem 1.2]\label{hype}
Let $n\geq1$ be given. Then for any $e\geq1$ there are  $k=k(n,e)\geq1$, $\varepsilon:=\varepsilon(n,e)$, and $C=C(n,e)$, such that if $\mathcal{X}\subseteq \mathbb{R}^n$ has a strong $k$-parametrization, then for all $T$ at most $CT^\varepsilon$ many hypersurfaces in $\mathbb{R}^n$ of degree $\leq e$ are enough to cover $\mathcal{X}(\mathbb{Q},T)$, with $\varepsilon(n,e)\rightarrow0$ as $e\rightarrow\infty$.
\end{thm}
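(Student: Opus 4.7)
The plan is to follow the determinant method of Bombieri--Pila, adapted to parametrized sets. Fix $n$, $e$, and a strong $k$-parametrization $\phi\colon(0,1)^m\to X$. Let $D=\binom{n+e}{n}$ be the number of monomials $M_1,\dots,M_D$ of degree $\le e$ in $n$ variables. Given any collection of $D$ rational points $P_1,\dots,P_D\in X(\mathbb{Q},T)$, form the $D\times D$ matrix $\Delta=(M_j(P_i))_{i,j}$. Observe that if $\det\Delta\ne0$ then the $P_i$ are in general position with respect to degree-$e$ hypersurfaces, while if $\det\Delta=0$ then the $P_i$ all lie on a single hypersurface of degree $\le e$ (given by the linear combination of $M_1,\dots,M_D$ whose coefficients are the cofactors along any fixed column). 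The strategy is to force the determinant to vanish by playing the analytic smallness of $\Delta$ against the arithmetic largeness of its denominator.

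First, I partition $(0,1)^m$ into roughly $r^{-m}$ cubes of sidelength $r$, where $r=r(n,e,T)$ is to be chosen. On a single cube with center $x_0$, each parameter $P_i=\phi(x_i)$ is a value of $\phi$ within distance $O(r)$ of $\phi(x_0)$. Expanding each $M_j\circ\phi$ in a multivariate Taylor polynomial of order $K$ around $x_0$ and using that $|\phi^{(\alpha)}|\le 1$ for $|\alpha|\le k$ gives a representation of each row of $\Delta$ up to an error of order $r^{K+1}$. Choosing $K$ so that the span of these Taylor polynomials has dimension less than $D$ (which is possible once $K$ is large enough relative to $m$, $n$, $e$, which in turn forces our choice of $k$), row reduction applied to $\Delta$ yields the estimate $|\det\Delta|\le C(n,e)\, r^{\,\kappa(n,e)}$ for some exponent $\kappa(n,e)$ growing linearly in $k$.

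On the other hand, each $M_j(P_i)$ is a rational number of height $\le T^e$ (since $M_j$ has degree $\le e$ and $H(P_i)\le T$), so the common denominator of all entries of $\Delta$ is at most $T^{eD}$. Therefore either $\det\Delta=0$ or $|\det\Delta|\ge T^{-eD}$. Choosing $r$ so that $C(n,e)\,r^{\kappa(n,e)}<T^{-eD}$, i.e.\ $r\asymp T^{-eD/\kappa(n,e)}$, we deduce that every set of $D$ points of $X(\mathbb{Q},T)$ lying in a single cube is contained in a common hypersurface of degree $\le e$. Consequently, the whole of $X(\mathbb{Q},T)$ can be covered by one such hypersurface per cube, giving a total of at most $r^{-m}\asymp T^{\,meD/\kappa(n,e)}$ hypersurfaces. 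Setting $\varepsilon(n,e):=meD/\kappa(n,e)$ and taking $k=k(n,e)$ large enough that $\kappa(n,e)$ dominates $meD$ by any prescribed factor gives the desired bound.

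The main obstacle, and the subtle point of the theorem, is obtaining $\varepsilon(n,e)\to0$ as $e\to\infty$ rather than merely $\varepsilon(n,e)\to0$ as $k\to\infty$ for fixed $e$. Since $D\sim e^n/n!$, the naive estimate $\varepsilon\asymp eD/\kappa$ grows with $e$ unless $\kappa(n,e)$ grows faster than $e^{n+1}$. The resolution is to sharpen the Taylor estimate: instead of using a single Taylor polynomial of order $K$, one uses the fact that the full set of Taylor monomials of order $\le K$ in $m$ variables has dimension $\binom{m+K}{m}$, which grows like $K^m$; by choosing $K$ so that $\binom{m+K}{m}$ is comparable to $D\sim e^n$, we get $K\asymp e^{n/m}$, and the determinant estimate improves to $|\det\Delta|\le C(n,e)\,r^{D\cdot K/\text{const}}$, so $\kappa(n,e)\asymp D\cdot e^{n/m}$. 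Then $\varepsilon(n,e)\asymp me/e^{n/m}\to 0$ as $e\to\infty$ provided $n/m\ge 1$, which holds since $m\le n$ (a strong parametrization of a subset of $\mathbb{R}^n$ has parameter dimension at most $n$). This refined derivative-counting is the crux; everything else is packaging.
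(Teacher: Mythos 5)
The paper does not actually prove this statement: it is imported verbatim from \cite{BD}, Theorem 1.2, and the only ``proof content'' in the paper is the two-paragraph sketch of the determinant set-up that follows it (the $D\times D$ matrix of monomials evaluated at rational points, and the fact that its vanishing yields a covering hypersurface whose coefficients are minors of the matrix), which the paper records solely to extract height bounds on those coefficients in the proof of Theorem \ref{slow}. Your proposal reconstructs the genuine Bombieri--Pila argument underlying \cite{BD}, and its skeleton is right: subdivide $(0,1)^m$ into $\asymp r^{-m}$ cubes; bound $|\det\Delta|$ above by $Cr^{\kappa}$ via Taylor expansion and the $C^k$ bounds of the strong parametrization; bound it below by $T^{-neD}$ when nonzero (your $T^{-eD}$ forgets that each point has $n$ coordinates, which only changes constants); and, crucially, obtain the refined exponent $\kappa\asymp D\cdot D^{1/m}$ by grading the Taylor monomials by degree, which is exactly what makes $\varepsilon(n,e)\to0$ as $e\to\infty$. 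One routine gap: passing from ``every $D$ points in a cube lie on some degree-$\le e$ hypersurface'' to ``one hypersurface per cube'' requires observing that then all $D\times D$ minors of the full $(\#\,\text{points})\times D$ matrix vanish, so that matrix has rank $<D$ and a single kernel vector serves for the whole cube; your ``consequently'' elides this.

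The one substantive issue is your closing claim that $\varepsilon\asymp me/e^{n/m}\to0$ ``provided $n/m\ge1$, which holds since $m\le n$.'' The limit requires $n/m>1$, i.e.\ $m<n$ strictly; at $m=n$ your own estimate gives $\varepsilon\asymp m$, and the statement is in fact false there: $X=(0,1)^n$ has a strong $k$-parametrization for every $k$ (the identity map) yet contains $\asymp T^{2n}$ rational points of height $\le T$, which cannot be covered by $cT^{\varepsilon}$ hypersurfaces of bounded degree. The hypothesis $m<n$ is present in \cite{BD}, Theorem 1.2, and is suppressed in the paper's restatement; it is harmless in the application because the sets parametrized via Theorem \ref{param} have empty interior. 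So your argument is the correct one, but $m<n$ must be taken as a hypothesis rather than deduced from $m\le n$.
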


Those hypersurfaces are given by the ``determinant method" of Bombieri and Pila \cite{BP}. For the future use, let us briefly review the method. Let $n=2, J:=\{(i,j):0\leq i,j\leq e \text{ and } i+j\leq e\}$ and $D:=|J|=\frac{(e+1)(e+2)}{2}$. 

Take $D$ rational points $(p_1, q_1), \cdots, (p_D,q_D)\in \mathcal{X}(\mathbb{Q},T)$. Bombieri and Pila observed that if these points are sufficiently close to one another, then the determinant of the $D\times D$ matrix
\begin{align}
\left(p_l^iq_l^j\right)_{1\leq l\leq D, (i,j)\in J}\label{BPmatrix}
\end{align}
must vanish, as it is a rational number of bounded height. In that case, a nontrivial solution $(a_{i,j})_{(i,j)\in J}$ to the system of linear equations produced by the matrix \eqref{BPmatrix} yields a polynomial $$\sum_{(i,j)\in J}a_{i,j}x^iy^j=0$$ defining a hypersurface in $\mathbb{R}^2$ of degree $\leq e$ that contains all the chosen points.

From now on, we introduce a more general version of the Pila–Wilkie theorem that counts arbitrarily algebraic points of a given set, extending beyond rational points. As in the original version, the number of such points grows with the height, but in this case, it additionally requires controlling one more variable: the algebraic degree of a point. 

Before stating the theorem, let us go over some relevant definitions. For $d\geq1$, the height of an algebraic number $\alpha\in\mathbb{R}$ with $[\mathbb{Q}(\alpha):\mathbb{Q}]\leq d$ is defined by 
\begin{align*}
H^{poly}_d(\alpha):=\min\left\{H(\xi):\xi\in\mathbb{Q}^d, \alpha^d+\xi_1\alpha^{d-1}+\cdots\xi_d=0\right\}\in\mathbb{N}^{\geq1}.
\end{align*}
For general $\alpha\in\mathbb{R}$, $H^{poly}_d(\alpha):=\infty$ when $[\mathbb{Q}(\alpha):\mathbb{Q}]> d$. As in the rational case, if $\alpha=(\alpha_1,\cdots, \alpha_n)\in\mathbb{R}^n$, then $H^{poly}_d(\alpha):=\max\{H^{poly}_d(\alpha_1), \cdots, H^{poly}_d(\alpha_n)\}$. Lastly, for $\mathcal{Y}\subseteq\mathbb{R}^n$, let
\begin{align*}
\mathcal{Y}_d(T):=\{\alpha\in \mathcal{Y} : H^{poly}_d(\alpha)\leq T\}\quad \text{and}\quad N_d(\mathcal{Y},T):=\#\mathcal{Y}_d(T).
\end{align*}

The following theorem can be seen as a generalization of Theorem \ref{PWcount}, which corresponds to the case $d=1$.

\begin{thm}[\cite{BD}, Theorem 8.9]\label{countingAlg}
Let $\mathcal{Y}\subseteq\mathbb{R}^n$ be definable in an o-minimal structure, and let $\varepsilon>0$ be given. Then there is a constant $C=C(\mathcal{Y},d,\varepsilon)$ such that for all $T$,
\begin{align*}
N_d(\mathcal{Y}^{tr},T)\leq C T^\varepsilon.
\end{align*}
\end{thm}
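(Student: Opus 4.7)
The plan is to reduce Theorem \ref{countingAlg} to the rational-point Pila--Wilkie counting theorem (Theorem \ref{PWcount}) by the device, essentially due to Pila, of passing to tuples of Galois conjugates. The observation driving everything is that if $\alpha \in \mathbb{R}^n$ has coordinates algebraic of degree $\leq d$ and $H^{poly}_d(\alpha) \leq T$, then each coordinate $\alpha_j$ is a root of an integer polynomial of degree $\leq d$ and naive height $\leq T$; hence its Galois conjugates are bounded in absolute value by a constant depending only on $d$, and their elementary symmetric functions lie in $\mathbb{Q}$ with naive height at most $C T^d$ for some $C = C(n,d)$. This converts ``algebraic points of $H^{poly}_d$-height $\leq T$'' into ``rational points of height $\leq CT^d$'' in an enlarged ambient space.

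First I would invoke the parametrization theorem (Theorem \ref{param}) to reduce, after a finite partition, to the case that $Y$ is covered by finitely many strong $k$-parametrizations. Next I would introduce the auxiliary definable set
$$Y^{[d]} := \bigl\{(y^{(1)}, \ldots, y^{(d)}) \in (\mathbb{R}^n)^d : y^{(1)} \in Y,\ y^{(i)}_j \text{ is a real Galois conjugate of } y^{(1)}_j \text{ for each } i,j \bigr\},$$
together with the coefficient map $\sigma : Y^{[d]} \to \mathbb{R}^{dn}$ sending each coordinate of $y^{(1)}$ to the tuple of elementary symmetric functions of its conjugates (padded with zeros if the degree is less than $d$). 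Both $Y^{[d]}$ and $\sigma$ are definable in the same o-minimal structure, and the image $Z := \sigma(Y^{[d]})$ is again definable. Each $\alpha \in Y_d(T)$ produces a rational point $\sigma(\alpha, \alpha^{(2)}, \ldots, \alpha^{(d)}) \in Z$ of naive height $\leq CT^d$. Strictly real conjugates do not always exist, so in practice one replaces $(\mathbb{R}^n)^d$ above by a disjoint union indexed by the possible real/complex splitting types, recording complex conjugate pairs $a \pm bi$ as real pairs $(a,b)$; this refinement does not affect the argument.

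Applying Theorem \ref{PWcount} to $Z$ with parameter $\varepsilon/d$ yields $N(Z^{tr}, T^d) \leq c' T^{\varepsilon}$, so it remains to transfer this back to $Y$ by proving that $\alpha \in Y^{tr}$ forces $\sigma(\alpha, \alpha^{(2)}, \ldots, \alpha^{(d)}) \in Z^{tr}$. I argue by contrapositive: suppose the rational image lies in a connected positive-dimensional semialgebraic set $S \subseteq Z$. Then $\sigma^{-1}(S) \cap Y^{[d]}$ is semialgebraic by Tarski--Seidenberg, and since $\sigma$ has finite fibres (of cardinality at most $(d!)^n$, recording orderings of conjugates), $\dim \sigma^{-1}(S) = \dim S > 0$; projecting a connected component of $\sigma^{-1}(S)$ containing the preferred conjugate tuple onto the first $n$ coordinates yields a connected positive-dimensional semialgebraic subset of $Y$ through $\alpha$, so $\alpha \in Y^{alg}$, contradiction.

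The main obstacle is this last, transcendentality-transfer step, which must be executed carefully: positive-dimensionality and connectedness must really survive both the Tarski--Seidenberg preimage operation and the projection to the first $n$ coordinates. The delicate points are (i) handling the splitting-type decomposition of $Y^{[d]}$ so that $\sigma$ restricts to a finite map on each piece, and (ii) ensuring that after decomposing $\sigma^{-1}(S)$ into connected semialgebraic cells and choosing the one containing the relevant tuple, the projection remains positive-dimensional; this uses that the projection $\pi_1 : Y^{[d]} \to Y$ is also finite-to-one, since a point of $Y$ determines its conjugate tuples up to ordering. Together with the Pila--Wilkie bound on $Z^{tr}$, this transfer yields the desired estimate $N_d(Y^{tr}, T) \leq cT^\varepsilon$ and completes the proof.
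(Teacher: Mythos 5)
The paper does not prove this statement: it is quoted directly from \cite{BD}, Theorem 8.9, so there is no internal proof to compare against. Your overall strategy --- converting algebraic points of degree $\leq d$ into rational points via minimal-polynomial coefficients (equivalently, symmetric functions of conjugates) and then invoking the rational counting theorem --- is indeed the standard route, going back to Pila. However, two steps are genuinely broken as written.

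First, the set $Y^{[d]}$ is not definable in any o-minimal structure: ``$b$ is a Galois conjugate of $a$'' is an arithmetic condition, and its definability would make the set of real algebraic numbers of degree $\leq d$ a definable subset of $\mathbb{R}$ that is countable, infinite and dense, contradicting o-minimality. This is repairable by replacing the conjugacy condition with the semialgebraic condition that each $y_j$ be a root of a monic degree-$d$ polynomial whose coefficient vector $\eta_j$ is carried as an explicit coordinate. Second, and more seriously, the transcendentality-transfer step does not go through. Tarski--Seidenberg gives that $\sigma^{-1}(S)$ is semialgebraic, but $\sigma^{-1}(S)\cap Y^{[d]}$ is only \emph{definable}, because $Y$ itself is only definable; and the honestly semialgebraic set $\pi_1(\sigma^{-1}(S))$ is not contained in $Y$. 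Neither set can therefore witness $\alpha\in Y^{alg}$, which by the paper's definition requires a connected infinite semialgebraic subset \emph{of $Y$} through $\alpha$. This is exactly the obstruction that forces the actual proof to route through a stronger ``semi-rational''/block-family version of the counting theorem (developed in Section 8 of \cite{BD}) rather than Theorem \ref{PWcount} itself; with only the plain rational counting theorem and finiteness of the fibers of $\sigma$, your reduction cannot be closed.
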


While Theorem \ref{slow} might initially appear to be a straightforward consequence of the Pila–Wilkie counting theorem, the situation is more delicate. For instance, a direct application of the theorem requires a classification of all semialgebraic subsets of $$\{(x,y,z,w)\in\mathbb{R}^4:\Theta(x,y)=\Theta(z,w)\},$$ a task that is already highly complex and difficult to carry out explicitly.

Instead, we proceed indirectly. By focusing on the level curves of the volume formula, we show, via Theorem \ref{hype}, that these curves are generically algebraic when the growth rate of $N_M$ is sufficiently fast. Furthermore, we prove that their defining polynomials have uniformly bounded degrees and rational coefficients with controlled heights (see Claim \ref{algcurveclm}). Using these results, we then construct many algebraic points on the transcendental curve $\mathcal{Y}$, defined in \eqref{abcurve}, which are sufficiently abundant to deduce a contradiction with the conclusion of Theorem \ref{countingAlg}.

\subsection{Proof}

Now we are ready to prove Theorem \ref{slow}.

\begin{proof}[Proof of Theorem \ref{slow}]
To get a contradiction, suppose not, i.e. for some $\delta>0$, there exists a sequence of sets of distinct Dehn filling coefficients $$\{\{(p_{n,1}, q_{n,1}), \cdots, (p_{n,n},q_{n,n})\}\}_{n=1}^\infty$$
satisfying
\begin{align*}
\textnormal{vol} \,M(p_{n,1},q_{n,1})=\cdots=\textnormal{vol} \,M(p_{n,n},q_{n,n})
\end{align*}
and $n\geq(|p_{n,1}|+|q_{n,1}|)^\delta$ for sufficiently large $n$. For convenience, let $(p_n,q_n)$ denote $(p_{n,1},q_{n,1})$ where $|p_n|+|q_n|\rightarrow\infty$ as $n\rightarrow\infty$.

We claim
\begin{clm}\label{algcurveclm}
For sufficiently large $n$, the level set of the volume formula
\begin{align*}
L(p_n,q_n):=\{(x,y)\in\mathbb{R}^2 : \textnormal{vol} \,M(x,y)=\textnormal{vol} \,M(p_{n},q_{n})\}
\end{align*}
is an algebraic curve. Moreover, there exist constants $e, Q\in\mathbb{N}$, not depending on $n$, such that the defining polynomial $f_n(x,y)$ of the curve has degree $\leq e$ and rational coefficients of heights $\leq(|p_n|+|q_n|)^{Q}$.
\end{clm}

\begin{proof}[Proof of Claim \ref{algcurveclm}]
To apply Theorems \ref{param} and \ref{hype}, we first change the variables, following the method given in the proof of Lemma \ref{ThetaDefinable} in \cite{J2}.

Fix $C_1>1$. We may assume the coefficients $(p_{n,l},q_{n,l})$ lie in the portion $p>0$ and $-C_1<q/p<C_1$ for all $n$ and $l$. In fact, by changing the basis of $H_1(T^2)$, we can cover the whole $L(p_n,q_n)$ by a finite number of such portions. Let $x'=\frac{1}{x}$ and $y'=\frac{y}{x}$. Consider 
\begin{align*}
\tilde{L}(z,w):=\left\{(x',y')\in(0,1)\times\left(-C_1,C_1\right)\subset\mathbb{R}^2 : \Theta\left(\frac{1}{x'}, \frac{y'}{x'}\right)=\Theta(z,w)\right\}.
\end{align*}
Note that $$\left(\frac{1}{p_{n,1}}, \frac{q_{n,1}}{p_{n,1}}\right), \cdots, \left(\frac{1}{p_{n,n}}, \frac{q_{n,n}}{p_{n,n}}\right)$$
are $n$ distinct rational points of $\tilde{L}(p_n,q_n)$ with heights $\leq C_2(|p_n|+|q_n|)$ for some $C_2>0$ by Lemma \ref{Anear}.

Since $\tilde{L}(p_n,q_n)$ is bounded and definable in $\mathbb{R}_{an}$, by Theorem \ref{param}, it can be covered by a finite number of subsets, each having a strong $k$-parametrization. Moreover, as they are fibers of $\tilde{L}$, the number of subsets can be taken independently from $n$.

Take sufficiently large $e$ such that $\varepsilon(2,e)<\delta$. By Theorem \ref{hype}, there exists $C_3>0$ such that for all $T$ at most $C_3T^\varepsilon$ many hypersurfaces in $\mathbb{R}^2$ of degree $\leq e$ are enough to cover $\tilde{L}(p_n,q_n)(\mathbb{Q},T)$. Define $\mathcal{Z}\subset\mathbb{R}^{D+2}\times\mathbb{R}^2$ to be the set
\begin{align*}
\mathcal{Z}:=\biggl\{&(z,w, a_{e,0},a_{e-1,1}, \cdots, a_{0,0};x',y')\in\mathbb{R}^{D+2}\times\mathbb{R}^2 : \\
&\Theta\left(\frac{1}{x'}, \frac{y'}{x'}\right)=\Theta(z,w)\text{ and } a_{e,0}x'^e+a_{e-1,1}x'^{e-1}y'+\cdots+a_{0,0}=0\biggr\}.
\end{align*}
This is the intersection of a set defined by the definable function $\Theta$ with a semialgebraic set, and is therefore definable in $\mathbb{R}_{an}$. By Proposition \ref{finiteness}, there exist $P\in\mathbb{N}$ such that for each $(p,q, a_{e,0},\cdots, a_{0,0})\in\mathbb{R}^{D+2}$, the fiber $\mathcal{Z}_{(p,q, a_{e,0},\cdots, a_{0,0})}$ has at most $P$ connected components. This implies that for any hypersurface in $\mathbb{R}^2$ of degree $\leq e$, its intersection with $\tilde{L}(p_n,q_n)$ has at most $P$ points unless $\tilde{L}(p_n,q_n)$ itself coincides with that hypersurface.

Since $n(\geq(|p_{n}|+|q_{n}|)^\delta)>C_3P(C_2(|p_n|+|q_n|))^{\varepsilon}$ for $n$ sufficiently large, we conclude each $\tilde{L}(p_n,q_n)$ is equal to one of the hypersurfaces in $\mathbb{R}^2$ of degree $\leq e$ given by Theorem \ref{hype}.

By the discussion below Theorem \ref{hype}, there exists a constsnt $Q>0$, not depending on $n$, such that each $\tilde{L}(p_n,q_n)$ is defined by a rational polynomial $\tilde{f}_n(x',y')$ of degree $\leq e$ whose coefficients have heights $\leq(|p_n|+|q_n|)^{Q}$. The conclusion of the claim follows immediately from this. \end{proof}

From the remark following the proof of Proposition \ref{transc1}, the curve 
\begin{align*}
    \mathcal{Y}:=\{(x,z)\in\mathbb{R}^2 : \Theta(x,0)=\Theta(az,bz)\}
\end{align*}
is transcendental for some $(a,b)\in\mathbb{Z}^2$. Fix such a pair. Then $\mathcal{Y}$ is definable in $\mathbb{R}_{an}$. For each $n$, if $(\alpha_n,\beta_n)\in\mathbb{R}^2$ is a solution to
\begin{align*}
    \Theta(\alpha_n,0)=\Theta(p_n,q_n)=\Theta(a\beta_n,b\beta_n), 
\end{align*}
then $(\alpha_n,0)$ and $(a\beta_n,b\beta_n)$ lie on the algebraic curve $L(p_n,q_n)$, and so
$$f_n(\alpha_n,0)=f_n(a\beta_n,b\beta_n)=0,$$
where $f_n$ is the same as given in the proof of Claim \ref{algcurveclm}. By the same claim, we further get $(\alpha_n,\beta_n)$ is an algebraic point on $\mathcal{Y}$ of degree $\leq e$ with height $\leq(|p_n|+|q_n|)^{Q+1}$ for all sufficiently large $n$. Therefore, defining $T_n:=(|p_n|+|q_n|)^{Q+1}$, we have $(\alpha_n,\beta_n)\in\mathcal{Y}_e(T_n)$ for $n$ large enough. Let $n_0$ denote the smallest index for which this inclusion holds for $n>n_0$.

Take $\varepsilon'<\delta/(2(Q+1))$. By theorem \ref{countingAlg}, there is a constant $C_4$ such that $N_e(\mathcal{Y},T)\leq C_4 T^{\varepsilon'}$ for all $T$. However,
\begin{align*}
    \frac{1}{2}(|p_n|+|q_n|)^\delta\leq\frac{1}{2}n< n-n_0< N_e(\mathcal{Y},T_n)\leq C_4 T_n^{\varepsilon'}<C_4(|p_n|+|q_n|)^{\delta/2}
\end{align*}
for $n>2n_0$, which is a contradiction.
\end{proof}

\section{Examples}\label{exm}

\begin{exm}\label{208comp}
Let $M$ be the manifold m208 which is an arithmetic manifold commensurable to m003 (and hence to the figure eight knot complement m004). The tetrahedral manifold otet$08\_{0010}$ is a 2-fold covering of $M$ and a 4-fold covering of m003. (the later covering is combinatorial. See Figure 2 of \cite{FGGTV}.) The manifold $M$ has the cusp shape $c_1=\omega$. We show that 
\begin{equation}\label{25050103}
\begin{aligned}
    \textnormal{vol}_{\mathbb{C}}\,M(p,q)=\textnormal{vol}_{\mathbb{C}}\,M(-q,p+q)=\textnormal{vol}_{\mathbb{C}}\,M(-p-q,p)\quad&\mod i\pi^2\mathbb{Z},\\
    \textnormal{vol}_{\mathbb{C}}\,M-\textnormal{vol}_{\mathbb{C}}\,M(p,q)=\overline{\textnormal{vol}_{\mathbb{C}}\,M-\textnormal{vol}_{\mathbb{C}}\,M(q,p)}\quad&\mod i\pi^2\mathbb{Z}
\end{aligned}
\end{equation}
for sufficiently large $|p|+|q|$. 

\begin{figure}[htb]
\begin{tikzpicture}[line width=1.2, line cap=round, line join=round, scale=.5]
    \begin{scope}[color=yellow!90!red]
    \draw (8.38, 6.11) .. controls (8.40, 6.44) and (8.40, 6.76) .. 
          (8.33, 7.08) .. controls (8.25, 7.43) and (8.01, 7.71) .. 
          (7.75, 7.97) .. controls (7.47, 8.25) and (7.16, 8.50) .. 
          (6.81, 8.69) .. controls (6.53, 8.85) and (6.22, 8.96) .. 
          (5.91, 9.02) .. controls (5.56, 9.08) and (5.22, 9.09) .. 
          (4.87, 9.07) .. controls (4.46, 9.05) and (4.05, 8.97) .. 
          (3.66, 8.85) .. controls (3.27, 8.73) and (2.90, 8.58) .. 
          (2.57, 8.35) .. controls (2.24, 8.13) and (1.97, 7.84) .. 
          (1.74, 7.51) .. controls (1.55, 7.22) and (1.37, 6.91) .. 
          (1.32, 6.57) .. controls (1.29, 6.33) and (1.32, 6.09) .. (1.45, 5.89);
    \draw (1.60, 5.65) .. controls (1.82, 5.32) and (2.13, 5.07) .. 
          (2.50, 4.93) .. controls (2.88, 4.80) and (3.30, 4.85) .. (3.65, 5.05);
    \draw (3.65, 5.05) .. controls (4.00, 5.25) and (4.40, 5.34) .. 
          (4.80, 5.35) .. controls (5.11, 5.36) and (5.45, 5.37) .. 
          (5.71, 5.22) .. controls (5.88, 5.12) and (6.06, 5.03) .. (6.24, 4.95);
    \draw (6.50, 4.84) .. controls (6.92, 4.66) and (7.39, 4.85) .. 
          (7.73, 5.18) .. controls (8.01, 5.45) and (8.35, 5.78) .. (8.38, 6.11);
  \end{scope}
  \begin{scope}[color=green]
    \draw (6.37, 4.90) .. controls (6.48, 5.17) and (6.65, 5.42) .. 
          (6.84, 5.65) .. controls (7.03, 5.88) and (7.28, 6.07) .. 
          (7.58, 6.14) .. controls (7.80, 6.19) and (8.03, 6.23) .. (8.24, 6.16);
    \draw (8.51, 6.07) .. controls (8.67, 6.02) and (8.90, 5.82) .. 
          (9.08, 5.68) .. controls (9.37, 5.44) and (9.56, 5.10) .. 
          (9.67, 4.75) .. controls (9.82, 4.27) and (9.85, 3.77) .. 
          (9.85, 3.28) .. controls (9.84, 2.73) and (9.70, 2.20) .. 
          (9.40, 1.74) .. controls (9.13, 1.35) and (8.82, 0.99) .. 
          (8.43, 0.72) .. controls (8.07, 0.46) and (7.65, 0.30) .. 
          (7.22, 0.23) .. controls (6.79, 0.16) and (6.36, 0.12) .. 
          (5.93, 0.16) .. controls (5.54, 0.19) and (5.17, 0.32) .. (4.86, 0.55);
    \draw (4.86, 0.55) .. controls (4.51, 0.80) and (4.12, 1.08) .. 
          (4.14, 1.50) .. controls (4.15, 1.74) and (4.20, 1.99) .. 
          (4.34, 2.18) .. controls (4.49, 2.37) and (4.67, 2.54) .. (4.86, 2.69);
    \draw (5.08, 2.87) .. controls (5.26, 3.01) and (5.48, 3.19) .. 
          (5.58, 3.32) .. controls (5.68, 3.46) and (5.78, 3.64) .. 
          (5.87, 3.80) .. controls (6.06, 4.14) and (6.22, 4.52) .. (6.37, 4.90);
  \end{scope}
  \begin{scope}[color=purple!60!blue]
    \draw (3.60, 5.18) .. controls (3.54, 5.35) and (3.45, 5.52) .. 
          (3.31, 5.64) .. controls (3.13, 5.78) and (2.86, 5.85) .. 
          (2.61, 5.91) .. controls (2.25, 6.01) and (1.87, 5.93) .. (1.53, 5.77);
    \draw (1.53, 5.77) .. controls (1.06, 5.54) and (0.63, 5.22) .. 
          (0.39, 4.75) .. controls (0.18, 4.33) and (0.09, 3.85) .. 
          (0.10, 3.37) .. controls (0.12, 2.91) and (0.20, 2.44) .. 
          (0.40, 2.02) .. controls (0.59, 1.63) and (0.85, 1.29) .. 
          (1.17, 1.00) .. controls (1.44, 0.76) and (1.73, 0.53) .. 
          (2.07, 0.39) .. controls (2.43, 0.23) and (2.82, 0.18) .. 
          (3.21, 0.14) .. controls (3.73, 0.10) and (4.26, 0.24) .. (4.73, 0.49);
    \draw (4.98, 0.61) .. controls (5.33, 0.79) and (5.58, 1.13) .. 
          (5.58, 1.52) .. controls (5.59, 2.00) and (5.30, 2.43) .. (4.97, 2.78);
    \draw (4.97, 2.78) .. controls (4.67, 3.11) and (4.36, 3.44) .. 
          (4.15, 3.83) .. controls (3.96, 4.17) and (3.82, 4.54) .. (3.69, 4.91);
  \end{scope}
  \node[left] at (0.2,3) {$(1,3)$};
  \node[right] at (9.8,3) {$(1,3)$};
\end{tikzpicture}
\caption{The manifold m208 which is obtained from the exterior of the link $6_1^3$ by filling two cusps}\label{208link}
\end{figure}

By Corollaries \ref{maincor} and \ref{conjucor}, it is enough to show that the holonomy variety is invariant under the monomial transformations arise from the symmetries given in \eqref{25050103}. 

From the gluing equations, we find that the defining polynomial of the geometric component of the holonomy variety $\mathcal{X}$ of $M$ is given as follows:
\begin{align*}
    \begin{pmatrix}
    m^{-4} & \cdots & m^4
    \end{pmatrix}\begin{pmatrix}
     &  &  & & & & 1 & & \\
     &  &  &  & -2 & -2 & -2 & -2 &  \\
    & & 1 & -2 & 29 & -28 & 29 & -2 & 1  \\
    & & -2 & -28 & 2 & 2 & -28 & -2 &   \\
    & -2 & 29 & 2 & 12 & 2 & 29 & -2 & \\
     & -2 & -28 & 2 & 2 & -28 & -2 & & \\
    1 & -2 & 29 & -28 & 29 & -2 & 1 & & \\
   & -2 & -2 & -2 & -2 & & & & \\
     &  & 1 & &  & &  & & 
    \end{pmatrix}\begin{pmatrix}
    l^{-4}  \\
    \vdots \\
    l^4
    \end{pmatrix}.
\end{align*}

The coefficient matrix is that of Example 10.0.22 of \cite{C}. The symmetry of the polynomial shows that it is invariant under 
\begin{equation*}
m\mapsto l, l\mapsto m^{-1}l \quad \text{and}\quad m\mapsto l, l\mapsto m. 
\end{equation*}
By Corollaries \ref{maincor} and \ref{conjucor}, the conclusion follows. 
\end{exm}

\begin{exm}\label{135comp}
Let $M$ be the manifold m135 which is the punctured torus bundle over $S^1$ with monodromy $-LLRR$ (\cite{D}, Section 5). It has the cusp shape $c_1=i$. We claim that
\begin{align*}
    \textnormal{vol}_{\mathbb{C}}\,M(p,q)=\textnormal{vol}_{\mathbb{C}}\,M(-q,p)\quad&\mod  i\pi^2\mathbb{Z},\\
    \textnormal{vol}_{\mathbb{C}}\,M-\textnormal{vol}_{\mathbb{C}}\,M(p,q)=\overline{\textnormal{vol}_{\mathbb{C}}\,M-\textnormal{vol}_{\mathbb{C}}\,M(q,p)}\quad&\mod  i\pi^2\mathbb{Z}
\end{align*}
for sufficiently large $|p|+|q|$. 

\begin{figure}[htb]
\begin{tikzpicture}[line width=1.2, line cap=round, line join=round, scale=.6]
\definecolor{linkcolor0}{RGB}{235, 144, 130}
\definecolor{linkcolor1}{RGB}{111, 128, 221}
\definecolor{linkcolor2}{RGB}{209, 133, 189}
  \begin{scope}[color=blue]
    \draw (3.78, 4.90) .. controls (3.78, 5.60) and (3.45, 6.27) .. 
          (2.83, 6.57) .. controls (2.24, 6.85) and (1.55, 6.80) .. 
          (1.02, 6.42) .. controls (0.17, 5.83) and (0.15, 4.68) .. 
          (0.14, 3.64) .. controls (0.13, 2.57) and (0.17, 1.42) .. 
          (0.99, 0.75) .. controls (1.51, 0.34) and (2.20, 0.19) .. 
          (2.81, 0.46) .. controls (3.31, 0.68) and (3.74, 1.07) .. (3.75, 1.60);
    \draw (3.75, 2.01) .. controls (3.75, 2.32) and (3.76, 2.62) .. (3.76, 2.92);
    \draw (3.76, 2.92) .. controls (3.76, 3.08) and (3.76, 3.25) .. (3.76, 3.41);
    \draw (3.77, 3.82) .. controls (3.77, 4.18) and (3.77, 4.54) .. (3.78, 4.90);
  \end{scope}
  \begin{scope}[color=yellow!50!orange]
    \draw (6.23, 1.86) .. controls (6.24, 1.20) and (6.71, 0.67) .. 
          (7.31, 0.40) .. controls (7.91, 0.14) and (8.61, 0.14) .. 
          (9.09, 0.57) .. controls (9.81, 1.21) and (9.79, 2.37) .. 
          (9.77, 3.39) .. controls (9.74, 4.44) and (9.72, 5.57) .. 
          (8.97, 6.29) .. controls (8.49, 6.74) and (7.77, 6.81) .. 
          (7.16, 6.54) .. controls (6.59, 6.28) and (6.12, 5.79) .. (6.14, 5.17);
    \draw (6.15, 4.76) .. controls (6.16, 4.49) and (6.16, 4.22) .. (6.17, 3.94);
    \draw (6.18, 3.53) .. controls (6.19, 3.37) and (6.19, 3.20) .. (6.19, 3.03);
    \draw (6.19, 3.03) .. controls (6.21, 2.64) and (6.22, 2.25) .. (6.23, 1.86);
  \end{scope}
  \begin{scope}[color=magenta]
    \draw (3.77, 3.61) .. controls (3.07, 3.58) and (2.28, 3.60) .. 
          (2.27, 4.20) .. controls (2.26, 4.74) and (2.94, 4.88) .. (3.57, 4.90);
    \draw (3.98, 4.91) .. controls (4.70, 4.93) and (5.42, 4.95) .. (6.14, 4.97);
    \draw (6.14, 4.97) .. controls (7.10, 5.00) and (7.97, 4.36) .. 
          (7.97, 3.46) .. controls (7.98, 2.59) and (7.29, 1.88) .. (6.43, 1.86);
    \draw (6.02, 1.85) .. controls (5.26, 1.84) and (4.51, 1.82) .. (3.75, 1.81);
    \draw (3.75, 1.81) .. controls (3.10, 1.80) and (2.35, 1.78) .. 
          (2.30, 2.32) .. controls (2.26, 2.83) and (2.95, 2.88) .. (3.55, 2.91);
    \draw (3.97, 2.93) .. controls (4.64, 2.96) and (5.31, 2.99) .. (5.99, 3.02);
    \draw (6.40, 3.04) .. controls (6.59, 3.05) and (6.74, 3.22) .. 
          (6.72, 3.41) .. controls (6.70, 3.64) and (6.43, 3.75) .. (6.18, 3.74);
    \draw (6.18, 3.74) .. controls (5.37, 3.70) and (4.57, 3.66) .. (3.77, 3.61);
  \end{scope}
  \node at (5,1.2) {$(2,1)$};
  \node[right] at (9.8,3) {$(3,-1)$};
\end{tikzpicture}
\caption{The manifold m135 which is obtained from the exterior of the link $8_9^3$ by filling two cusps}\label{135link}
\end{figure}

The defining polynomial of the geometric component of the holonomy variety $\mathcal{X}$ of $M$ is given as
\begin{align*}
    \begin{pmatrix}
    m^{-1} & 1 & m
    \end{pmatrix}\begin{pmatrix}
    1 & 2 & 1 \\
    2 & -12 & 2 \\
    1 & 2 & 1
    \end{pmatrix}\begin{pmatrix}
    l^{-1}  \\
    1  \\
    l
    \end{pmatrix}, 
\end{align*}
and it is invariant under
\begin{equation*}
m\mapsto l^{-1}, l\mapsto m \quad \text{and}\quad m\mapsto l, l\mapsto m. 
\end{equation*}
As in the previous example, the claim is derived from Corollaries \ref{maincor} and \ref{conjucor}. 
\end{exm}

\begin{exm}\label{009comp}
Let $M$ be the manifold m009 which is the punctured torus bundle over $S^1$ with monodromy $LLR$ (\cite{DF} 2.6.2) and has the cusp shape $c_1=\sqrt{-7}$. The defining polynomial of the geometric component of the holonomy variety $\mathcal{X}$ of $M$ is 
\begin{align*}
    \begin{pmatrix}
    m^{-3} & \cdots & m^3
    \end{pmatrix}\begin{pmatrix}
     & -1 &  \\
     & 4 &  \\
     &  & \\
    1 & -8 & 1 \\
     &  &  \\
     & 4 & \\
     & -1 &
    \end{pmatrix}\begin{pmatrix}
    l^{-1}  \\
    1  \\
    l
    \end{pmatrix}, 
\end{align*}
which is invariant under $m\mapsto m, l\mapsto l^{-1}$. Hence
\begin{align*}
\textnormal{vol}_{\mathbb{C}}\,M-\textnormal{vol}_{\mathbb{C}}\,M(p,q)=\overline{\textnormal{vol}_{\mathbb{C}}\,M-\textnormal{vol}_{\mathbb{C}}\,M(-p,q)}\quad\mod i\pi^2\mathbb{Z}
\end{align*}
for sufficiently large $|p|+|q|$ follows by Corollary \ref{conjucor}.
\begin{figure}[htb]
\begin{tikzpicture}[line width=1.2, line cap=round, line join=round, scale=.6]
    \begin{scope}[color=red]
    \draw (2.42, 4.75) .. controls (2.81, 5.23) and (3.32, 5.61) .. 
          (3.90, 5.85) .. controls (4.49, 6.10) and (5.16, 6.07) .. 
          (5.76, 5.84) .. controls (6.29, 5.63) and (6.82, 5.38) .. (7.21, 4.96);
    \draw (7.45, 4.71) .. controls (7.86, 4.28) and (7.96, 3.66) .. 
          (7.93, 3.07) .. controls (7.90, 2.57) and (7.87, 2.00) .. (7.59, 1.63);
    \draw (7.59, 1.63) .. controls (7.19, 1.08) and (6.66, 0.62) .. 
          (6.02, 0.37) .. controls (5.43, 0.14) and (4.78, 0.11) .. 
          (4.17, 0.27) .. controls (3.48, 0.45) and (2.88, 0.88) .. (2.40, 1.41);
    \draw (2.17, 1.66) .. controls (1.84, 2.03) and (1.83, 2.55) .. 
          (1.84, 3.04) .. controls (1.84, 3.66) and (2.03, 4.27) .. (2.42, 4.75);
\end{scope}
  \begin{scope}[color=orange]
      \draw (2.29, 1.53) .. controls (1.55, 1.40) and (0.77, 1.60) .. 
          (0.43, 2.23) .. controls (0.12, 2.81) and (0.12, 3.50) .. 
          (0.47, 4.04) .. controls (0.84, 4.62) and (1.55, 4.84) .. (2.24, 4.77);
    \draw (2.59, 4.73) .. controls (3.38, 4.64) and (4.24, 3.86) .. (4.95, 3.21);
    \draw (4.95, 3.21) .. controls (5.68, 2.54) and (6.45, 1.86) .. (7.42, 1.66);
    \draw (7.77, 1.60) .. controls (8.39, 1.47) and (9.02, 1.76) .. 
          (9.39, 2.28) .. controls (9.78, 2.82) and (9.82, 3.52) .. 
          (9.50, 4.09) .. controls (9.09, 4.81) and (8.18, 4.93) .. (7.33, 4.84);
    \draw (7.33, 4.84) .. controls (6.45, 4.74) and (5.73, 4.00) .. (5.07, 3.33);
    \draw (4.83, 3.08) .. controls (4.11, 2.36) and (3.28, 1.72) .. (2.29, 1.53);
  \end{scope}
  \node[right] at (8.5,1) {$(4,-1)$};
\end{tikzpicture}
\caption{The manifold m009 which is obtained from the Whitehead link complement by filling one cusp}\label{009link}
\end{figure}
\end{exm}

\end{document}